\newtheorem{theorem}{Theorem}[section]
\newtheorem{lemma}[theorem]{Lemma}
\newtheorem{proposition}[theorem]{Proposition}
\newtheorem{corollary}[theorem]{Corollary}
\theoremstyle{definition}
\newtheorem{definition}[theorem]{Definition}
\newtheorem{example}[theorem]{Example}
\newtheorem{remark}[theorem]{Remark}
\def\cc{{\mathbb C}}
\def\rr{{\mathbb R}}
\def\qq{{\mathbb Q}}
\def\Aut{\operatorname{Aut}}
\def\PGL{\operatorname{PGL}}
\begin{document}

\title[Fields of moduli  of odd signature curves]{Fields of moduli and fields of definition\\ of odd signature curves}

\author{Michela Artebani}
\address{
Departamento de Matem\'atica, \newline
Universidad de Concepci\'on, \newline
Casilla 160-C,
Concepci\'on, Chile}
\email{martebani@udec.cl}

\author{Sa\'ul Quispe}
\address{
Departamento de Matem\'atica, \newline
Universidad de Concepci\'on, \newline
Casilla 160-C,
Concepci\'on, Chile}
\email{squispe@udec.cl}

\subjclass[2000]{14H37, 14H10, 14H45}
\keywords{Algebraic curves, field of moduli, field of definition}
\thanks{The authors have been partially
supported by Proyecto FONDECYT Regular 1110249.}

\maketitle

\begin{abstract}
Let $X$ be a smooth projective  curve of genus $g\geq2$ defined over a field $K$. We show that $X$ can be defined over its field of moduli $K_X$ if the signature of the covering $X\to X/\Aut(X)$ is of type $(0;c_1,\dots,c_k)$, where some $c_i$ appears an odd number of times.
This result is applied to $q$-gonal curves and to plane quartics. 
 \end{abstract}

\section*{Introduction}
 Let $X$ be a smooth projective curve of genus $g$ defined over a field $K$ and let $K_X$ be its field of moduli  (see Section 1, Definition \ref{def1}). It is well known that $X$ can be defined over $K_X$ if either $g=0,1$ or the automorphism group  of $X$ is trivial.
 However, there are examples of curves which can not be defined over $K_X$, as first observed by Earle and Shimura in \cite{Earle, Shi}.
In \cite{hu2} B. Huggins studied this problem for hyperelliptic curves in characteristic $p\neq 2$, proving that 
a hyperelliptic curve $X$ of genus $g \geq 2$ with hyperelliptic involution $\iota$ can be defined over $K_X$ provided that $\Aut(X)/\langle\iota\rangle$ is not cyclic or is cyclic of order divisible by $p$. 
%This result has been extended to  normal $q$-gonal curves in \cite{Ko}.

The first examples of non-hyperelliptic curves not definable over their field of moduli have been given in  \cite{hu2} and \cite{H}.

Recently R. Hidalgo \cite{hi} considered complex curves $X$ such that the natural covering $\pi_X:X\rightarrow X/\Aut(X)$ has signature of the form $(0; a, b, c, d)$, proving that $X$ can be defined over its field of moduli if $d\notin \{a, b, c\}$.
In this paper we observe that such result can be extended to {\em odd signature curves}, i.e. curves such that the signature of $\pi_X$ is of the form $(0; c_i,\cdots,c_r)$ where some $c_i$ appears exactly an odd number of times. More precisely, we prove the following result, which is a consequence of \cite[Theorem 3.1]{De}.

\begin{theorem}\label{sa}
 Let $X$ be a smooth projective  curve  of genus $g\geq2$ defined over a field $K$. If $X$ is an odd signature curve, then $K_X$ is a field of definition for $X$.
\end{theorem}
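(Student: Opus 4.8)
The plan is to deduce Theorem \ref{sa} from Dèbes–Emsalem type descent criteria, specifically from \cite[Theorem 3.1]{De}. The essential point is that the quotient map $\pi_X\colon X\to X/\Aut(X)=:Y$ has target a curve of genus $0$; since $Y$ carries a rational point over any field (it becomes $\pp^1$ after one checks the conic has a point, which it does once we exhibit the canonical $\Aut(X)$-invariant structure), $Y$ is itself definable over $K_X$ together with the branch divisor. The obstruction to descending $X$ to $K_X$ is then a class in a suitable cohomology set, and the odd-signature hypothesis will be exactly what kills it. So I would first recall the precise statement of \cite[Theorem 3.1]{De}: a curve $X$ with $X/\Aut(X)\cong \pp^1$ descends to its field of moduli provided the branch divisor $B\subset\pp^1$ (with its multiplicity structure) can be descended to $K_X$ together with a rational point, or more precisely provided a certain ``$\pp^1$ with marked points'' descent obstruction vanishes.

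**Key steps.** First I would fix a field of definition $L/K_X$ which is a finite Galois extension over which $X$, $\Aut(X)$ and the cover $\pi_X$ are all defined, and write $\Gamma=\mathrm{Gal}(L/K_X)$. For each $\sigma\in\Gamma$ the conjugate curve $X^\sigma$ is $L$-isomorphic to $X$ (definition of field of moduli), and one transports the cover: $\pi_X^\sigma\colon X^\sigma\to Y^\sigma$ is again the quotient by the automorphism group. The genus-$0$ curve $Y=X/\Aut(X)$ is canonically attached to $X$, hence $Y^\sigma\cong Y$ compatibly, so $Y$ has field of moduli $K_X$; since $Y$ is a genus-$0$ curve with the extra data of the branch locus, one shows $Y\cong_{K_X}\pp^1$. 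Now the branch divisor $B=\sum B_{c}$ decomposes according to the ramification values $c_1,\dots,c_r$ appearing in the signature: for each distinct value $c$, the sub-divisor $B_{(c)}$ of points with that ramification index is $\Gamma$-stable, hence defined over $K_X$. Among these there is one, say $B_{(c_{i_0})}$, consisting of an \emph{odd} number of points, i.e. a $K_X$-rational effective divisor of odd degree on $\pp^1_{K_X}$. A divisor of odd degree on a conic forces the conic to be split, and more to the point gives a $K_X$-rational point on $\pp^1$ after the standard argument (odd degree zero-cycle $\Rightarrow$ rational point on $\pp^1$, or: the obstruction to lifting the cover lives in a group of exponent dividing the relevant degrees and the gcd being odd trivializes the $2$-torsion obstruction). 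Feeding this rational point, or rather this odd-degree rational zero-cycle, into the criterion of \cite[Theorem 3.1]{De} yields that the field-of-moduli-to-field-of-definition obstruction for $X$ vanishes, i.e. $K_X$ is a field of definition.

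**The main obstacle.** The delicate step is the passage from ``there is a $K_X$-rational effective divisor of odd degree on $\pp^1$'' to ``the Dèbes–Emsalem obstruction class vanishes.'' The obstruction in \cite{De} is a class in $H^2(\mathrm{Gal}(\overline{K_X}/K_X), \overline{K_X}^\times)$ (or in a Brauer-group flavored object) attached to the gerbe of models of $X$; its restriction to the decomposition group of each branch point is controlled, and the upshot is that the class is annihilated by each ramification multiplicity and by the degree of each $\Gamma$-orbit of branch points. The odd-signature hypothesis guarantees the existence of an orbit configuration of odd total size, which combined with the fact that Hidalgo's four-point argument already handles the ``small'' cases shows the relevant class is both $2$-divisible and killed by an odd integer, hence zero. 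I expect the bulk of the work to be bookkeeping: tracking how $\Aut(X)$, the cover, and the marked branch points all descend compatibly through $\Gamma$, and verifying that the ``odd $c_i$'' really produces a $K_X$-rational odd-degree zero-cycle rather than merely an $L$-rational one. Once that compatibility is in place, the vanishing of the obstruction and hence the conclusion $K_X = $ field of definition follow formally from \cite[Theorem 3.1]{De}.
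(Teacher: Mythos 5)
Your proposal follows essentially the same route as the paper: apply D\`ebes--Emsalem to the canonical model $B$ of $X/\Aut(X)$ over the field of moduli, observe that the branch points sharing a fixed ramification index form a rational divisor, use the odd-signature hypothesis to extract a rational divisor of odd degree on the genus-$0$ curve $B$, deduce a rational point, and feed it back into D\`ebes--Emsalem (Theorem \ref{de}). Your Brauer-group phrasing of the step ``odd-degree rational zero-cycle on a conic $\Rightarrow$ rational point'' is a legitimate variant of the paper's Lemma \ref{dkr}, which does the same thing directly with Riemann--Roch.

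That said, three points need repair. First, your opening claim that $Y=X/\Aut(X)$ ``carries a rational point over any field'' is false and contradicts the rest of your own argument: the canonical model $B$ over the field of moduli is a genus-$0$ curve which in general is a non-split conic --- this is precisely the obstruction in the Earle--Shimura and Huggins examples --- and the only reason it splits here is the odd-degree rational divisor you produce later; the sentence as written is circular. Second, Theorem \ref{de} requires a rational point \emph{outside the branch locus}; an odd-degree divisor gives you one rational point, and to move it off the (finite) branch locus you need infinitely many rational points, i.e. the base field infinite (this is what Lemma \ref{dkr} actually delivers). The finite-field case is not covered by this argument and must be handled separately, as the paper does by citing \cite[Corollary 2.11]{hu2}. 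Third, you work directly with $\mathrm{Gal}(\overline{K_X}/K_X)$, but $K_X$ is defined as the intersection of all fields of definition; identifying definability over $K_X$ with definability over the relative fields of moduli $M_{F/L}(X)$ for all Galois extensions $F/L$ is a nontrivial reduction (Theorem \ref{sqa}, due to Huggins, resting on Koizumi's theorem) that your write-up silently assumes. None of these is fatal --- the core idea is the paper's --- but all three must be made explicit for the proof to be complete.
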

This result implies that non-normal $q$-gonal curves can be defined over their field of moduli and 
that plane quartics can be defined over their field of moduli if $|\Aut(X)|>4$.
In the last section of the paper we construct examples of plane quartics with 
$\Aut(X)\cong C_2$ which can not be defined over their field of moduli and we prove that,
in case $\Aut(X)\cong C_2\times C_2$,  the field of moduli relative to the extension $\cc/\rr$ is always a field of definition. This implies the following.

\begin{theorem}\label{qua}
Let $X$ be a smooth plane quartic over $\cc$ which is isomorphic to its conjugate. 
If  $\Aut(X)$ is not cyclic of order two, then $X$ can be defined over $\rr$.
 \end{theorem}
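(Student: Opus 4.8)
The plan is to run through the possible automorphism groups of a smooth plane quartic. Since $X$ is isomorphic to its conjugate $\bar X$, the field $\rr$ is the field of moduli of $X$ relative to $\cc/\rr$, and by Weil's descent criterion it suffices to produce an isomorphism $\phi\colon X\to\bar X$ with $\bar\phi\circ\phi=\mathrm{id}_X$. By the classification of automorphism groups of smooth plane quartics, if $|\Aut(X)|\le 4$ then $\Aut(X)$ is trivial, $C_2$, $C_3$ or $C_2\times C_2$. The case $\Aut(X)\cong C_2$ is excluded by hypothesis. If $\Aut(X)$ is trivial the isomorphism $X\to\bar X$ is unique and the cocycle condition holds automatically (this is the classical descent of curves with no automorphisms). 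If $|\Aut(X)|>4$ then, as already observed, $X$ is an odd signature curve, so Theorem \ref{sa} applies and $X$ is defined over $\rr$. If $\Aut(X)\cong C_3$, then for any isomorphism $f\colon X\to\bar X$ one has $\lambda:=\bar f\circ f\in\Aut(X)$, and replacing $f$ by $f\circ\alpha$ replaces $\lambda$ by $\lambda\cdot\sigma(\alpha)\alpha$, where $\sigma(\alpha):=f^{-1}\bar\alpha f$; since $\sigma$ is an automorphism of $C_3$ with $\sigma^2=\mathrm{id}$ and $\sigma(\lambda)=\lambda$, an elementary check (distinguishing $\sigma=\mathrm{id}$ from $\sigma=$ inversion) produces a suitable $\alpha$ — equivalently $H^2(\mathrm{Gal}(\cc/\rr),\Aut(X))=0$ because $3$ is prime to $[\cc:\rr]=2$. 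So the one remaining case is $\Aut(X)\cong C_2\times C_2$.

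For this case the plan is to use the diagonal (Ciani) model. A smooth plane quartic with $\Aut(X)\cong C_2\times C_2$ can be written over $\cc$ as
\[
F=\alpha x^4+\beta y^4+\gamma z^4+\delta\,y^2z^2+\epsilon\,z^2x^2+\zeta\,x^2y^2=0,
\]
with $\Aut(X)$ acting by the sign changes $x_i\mapsto\pm x_i$ on $\pp^2$; indeed the three coordinate lines are the eigenspaces of the $C_2\times C_2$-action on $H^0(X,\omega_X)$. Conjugating, $\bar X$ is the Ciani quartic with coefficients $\bar\alpha,\dots,\bar\zeta$ in the conjugate coordinates, and its automorphism group is again the diagonal $C_2\times C_2$, since sign matrices are real. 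Any $\cc$-isomorphism $f\colon X\to\bar X$ conjugates $\Aut(X)$ onto $\Aut(\bar X)$, hence normalizes the diagonal $C_2\times C_2$ in $\PGL_3(\cc)$, so $f$ is a monomial transformation $f=\mathrm{diag}(u,v,w)\cdot P_\tau$ with $\tau\in S_3$ and $u,v,w\in\cc^*$.

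It then remains to kill the obstruction $\lambda=\bar f\circ f\in C_2\times C_2$. A direct computation gives $\bar f\circ f=\mathrm{diag}(\bar u,\bar v,\bar w)\,P_\tau\,\mathrm{diag}(u,v,w)\,P_\tau$; since this must represent an element of the finite group $C_2\times C_2$, comparing the induced permutations forces $\tau^2=\mathrm{id}$, so $\tau$ is trivial or a transposition. If $\tau=\mathrm{id}$ the product equals $\mathrm{diag}(|u|^2,|v|^2,|w|^2)$, which represents a sign matrix only when $|u|=|v|=|w|$; then $\lambda=\mathrm{id}$ and $f$ is already a descent datum. If $\tau$ transposes, say, the last two coordinates, the product is $\mathrm{diag}(|u|^2,\bar v w,v\bar w)$, whose last two entries are complex conjugates; comparison with a sign matrix forces $\bar v w\in\rr$, so $\lambda=\mathrm{diag}(1,\eta,\eta)$ with $\eta\in\{\pm1\}$. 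If $\eta=1$ we are done; if $\eta=-1$ then $\lambda=\mathrm{diag}(1,-1,-1)$, and taking $\alpha=\mathrm{diag}(-1,1,-1)$ one computes $\sigma(\alpha)=f^{-1}\alpha f=\mathrm{diag}(-1,-1,1)$ (the transposition interchanges the two nontrivial elements distinct from $\lambda$), whence $\lambda\cdot\sigma(\alpha)\alpha=\mathrm{id}$ and $f\circ\alpha$ is a descent datum. In cohomological terms $H^2(\mathrm{Gal}(\cc/\rr),C_2\times C_2)$ vanishes for the relevant $\sigma$-twisted action, so there is no obstruction and $X$ descends to $\rr$.

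I expect the step needing the most care to be the reduction to the Ciani model: showing that $\Aut(X)\cong C_2\times C_2$ genuinely forces the three characters of the $C_2\times C_2$-action on $H^0(X,\omega_X)$ to be distinct (this is where smoothness of the plane quartic enters) and that the diagonalizing coordinates can be chosen so that complex conjugation still preserves the model, forcing $f$ to be monomial. Once the model is fixed, the descent computation is elementary linear algebra over $\cc$, the only subtle point being the use of the positivity of $|u|^2$ to pin down $\lambda$. One should also check, from the classification, that $C_4$ does not occur as the full automorphism group of a smooth plane quartic, so that the above case list is exhaustive.
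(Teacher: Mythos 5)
Your proposal is correct, and its skeleton matches the paper's: reduce to Weil's cocycle condition for $\cc/\rr$, dispose of the trivial case classically, use the odd-signature criterion (Theorem \ref{sa}) for the large groups, and treat $\Aut(X)\cong C_2\times C_2$ by hand on the Ciani model. The differences are in the details of that last case. The paper invokes Howe's classification of isomorphisms between Ciani quartics: it identifies when $X_{a,b,c}\cong X_{\bar a,\bar b,\bar c}$ via the order-$24$ group $G$ acting on coefficient triples, and then exhibits, for each generator $g_i$ of $G$, an explicit $\mu$ with $\bar\mu\mu=\mathrm{id}$. You instead derive that any isomorphism $f\colon X\to\bar X$ normalizes the diagonal Klein group in $\PGL_3(\cc)$, hence is monomial, and then compute $\bar f\circ f$ directly, killing the only possible nontrivial obstruction $\mathrm{diag}(1,-1,-1)$ by composing with $\mathrm{diag}(-1,1,-1)$. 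This is self-contained (no appeal to Howe) and, being a computation valid for an arbitrary monomial $f$, it uniformly covers all elements of $G$ rather than only its generators, which is arguably tidier; the positivity argument pinning down $\lambda$ in the $\tau=\mathrm{id}$ case is a nice touch. Two small remarks: your separate $H^2(\mathrm{Gal}(\cc/\rr),C_3)=0$ argument is an unnecessary detour, since the signature of $\pi_X$ for $\Aut(X)\cong C_3$ is $(0;3,3,3,3,3)$, which is already odd signature and hence covered by Theorem \ref{sa}; and the two points you flag as delicate are in fact routine --- a Klein four-group in $\PGL_3(\cc)$ is always simultaneously diagonalizable with image the full group of sign classes (its three involutions have distinct isolated fixed points, namely the coordinate points), and conjugating the Ciani equation automatically puts $\bar X$ in Ciani form with the same real sign matrices as automorphisms, so no further compatibility between the diagonalizing coordinates and complex conjugation is needed.
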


\subsubsection*{Acknowledgments}
This paper is part of the PhD thesis of the second author.
We are grateful to Ruben Hidalgo and to Antonio Laface for several useful comments.

\section{Preliminaries}
 Let $X$ be a smooth projective  curve defined over a field $K$.  A subfield $N$ of $K$ is a \emph{field of definition} of $X$ if there exists a curve $X'$ defined over $N$ such that $X'$ is isomorphic to $X$ over $K$.
 Moreover, we say that $X$ is \emph{definable} over $N$ if there exists a curve $X'$ defined over $N$ such that $X'$ is isomorphic to $X$ over $\bar K$.
\begin{definition}\label{def1}
 Let $K$ be a field, $\overline{K}$ be an algebraic closure of $K$ and $X$ be a curve defined over $K$.   The \emph{field of moduli} $K_X$ of $X$ is the intersection of all fields of definition of $X$, seen as a curve over $\overline{K}$.
\end{definition}

Another definition for the field of moduli, relative to a given field extension $F/L$, is given as  follows.
If $P\in F[x_0,\cdots,x_n]$ and $\sigma\in \Aut(F/L)$, then $P^{\sigma}$ denotes the  polynomial obtained by applying $\sigma$ to the coefficients of $P$. If a curve $X$ is defined as the zero locus of the homogeneous polynomials $P_1,\cdots,P_s\in F[x_0,\cdots,x_n]$, then the polynomials $P_1^{\sigma},\cdots,P_s^{\sigma}$ define a new smooth projective curve $X^{\sigma}$.
\begin{definition}
The \emph{field of moduli} of $X$ \emph{relative to the extension} $F/L$, denoted by $M_{F/L}(X)$, is the fixed field of the group $$U_{F/L}(X):=\{\sigma\in \Aut(F/L):\ X\  \textrm{is isomorphic to }X^{\sigma}\ \textrm{over } F\}.$$
\end{definition}

Let $P$ be the prime field of $K$. By a theorem of Koizumi (see \cite{Koi} and \cite[Theorem 1.5.8]{hu1}) the field of moduli $M_{\bar K/P}(X)$ is a purely inseparable extension of the field of moduli $K_X$. In particular these two fields coincide if $K$ is a perfect field. For example, if $K=\mathbb C$,  then $K_X=M_{\mathbb C/\mathbb Q}(X)$.
The relationship between $K_X$ and the fields of moduli of $X$ relative to Galois extensions  is given by the following result (see \cite[Theorem 1.6.8]{hu1}).

\begin{theorem}\label{sqa}
Let $X$ be a smooth projective algebraic curve defined over a field $K$ and $K_X$ be the field of moduli of $X$. Then $X$ is definable over $K_X$ if and only if given any algebraically closed field $F\supseteq K$, and any subfield $L\subseteq F$ with $F/L$ Galois, $X$ (seen as a curve over $F$) can be defined over the field $M_{F/L}(X)$.
\end{theorem}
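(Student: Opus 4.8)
The plan is to prove the two implications separately, with Koizumi's theorem (recalled just above) as the bridge between the absolute field of moduli $K_X$ and the relative fields of moduli $M_{F/L}(X)$. Throughout, $P$ denotes the prime field of $K$, so that $P\subseteq L$ for every subfield $L\subseteq F$.

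For the ``only if'' direction the point is the chain of inclusions
\[
K_X\ \subseteq\ M_{F/P}(X)\ \subseteq\ M_{F/L}(X),
\]
valid for every algebraically closed $F\supseteq K$ and every $L$ with $F/L$ Galois. The right-hand inclusion is purely formal: since $P\subseteq L$ we have $U_{F/L}(X)=U_{F/P}(X)\cap\Aut(F/L)\subseteq U_{F/P}(X)$, and taking fixed fields reverses the containment. The left-hand inclusion $K_X\subseteq M_{F/P}(X)$ is, for $F=\bar K$, part of Koizumi's theorem (which in fact gives that $M_{\bar K/P}(X)$ is purely inseparable over $K_X$); for a general $F$ one reduces to $F=\bar K$ after choosing a compatible algebraic closure $\bar K\subseteq F$, a routine specialization argument absorbing the extra transcendental directions. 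Granting this, if $X$ has a model $X_0$ over $K_X$ then, for any such $F$ and $L$, the base change $X_0\times_{K_X}M_{F/L}(X)$ is a model over $M_{F/L}(X)$ of $X$ seen as a curve over $F$ — which is precisely what the condition demands.

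For the ``if'' direction I would apply the hypothesis to the single pair $F=\bar K$, $L=P$. To do this legitimately one checks that $\bar K/P$ is Galois in the intended sense, i.e.\ that the fixed field of $\Aut(\bar K)=\Aut(\bar K/P)$ equals $P$: an element $\alpha\in\bar K\setminus P$ is either algebraic over $P$, in which case some automorphism of $\bar K$ carries it to a nontrivial conjugate, or transcendental over $P$, in which case a substitution such as $\alpha\mapsto\alpha+1$ on $P(\alpha)$ extends to an automorphism of $\bar K$ moving $\alpha$. The hypothesis then furnishes a model of $X$ (over $\bar K$) defined over $M_{\bar K/P}(X)$; by Koizumi's theorem this field is purely inseparable over $K_X$, so in characteristic $0$ — more generally, whenever $K$ is perfect — it coincides with $K_X$ and we are done, while in characteristic $p$ it remains to descend the model along the purely inseparable extension $M_{\bar K/P}(X)/K_X$.

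The one genuinely substantial input is Koizumi's theorem, which I treat as a black box; granting it, the equivalence is essentially formal. I expect the main remaining obstacle to be the purely inseparable descent needed in the ``if'' direction in characteristic $p$ — the only place where positive characteristic really intervenes — with the passage from $F=\bar K$ to an arbitrary algebraically closed $F$ in the ``only if'' direction a secondary, more routine point. In short, the plan is to present the theorem as a clean consequence of Koizumi's theorem together with the elementary fact that $\bar K/P$ is Galois.
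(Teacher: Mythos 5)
First, a remark on the comparison you asked me to make: the paper does not actually prove this statement — it is quoted from Huggins's thesis \cite[Theorem 1.6.8]{hu1} with a citation and no argument — so your proposal has to be judged on its own merits rather than against an in-paper proof.

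Your ``only if'' direction is essentially right: $M_{F/P}(X)\subseteq M_{F/L}(X)$ is formal, $K_X\subseteq M_{\bar K/P}(X)$ is exactly the containment supplied by the quoted form of Koizumi's theorem, and base-changing a $K_X$-model then gives the required model over $M_{F/L}(X)$. One caveat: the passage from a general algebraically closed $F$ to $\bar K$ is less routine than you suggest, because an element of $\Aut(F/P)$ need not stabilize the algebraic closure of $K$ inside $F$; identifying $M_{F/P}(X)$ with $M_{\bar K/P}(X)$ already requires descending isomorphisms from $F$ to smaller algebraically closed subfields. This is fixable, but it is a step, not a remark.

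The genuine gap is in the ``if'' direction in positive characteristic, precisely at the point you flag and then defer. Applying the hypothesis only to the pair $(F,L)=(\bar K,P)$ yields a model of $X$ over $M_{\bar K/P}(X)$, and Koizumi tells you this field is purely inseparable over $K_X$; but purely inseparable descent of curves is not automatic — indeed, Koizumi's theorem itself only guarantees a field of definition that is a \emph{finite purely inseparable extension} of the relative field of moduli, exactly because one cannot in general descend across such an extension. So ``it remains to descend the model along $M_{\bar K/P}(X)/K_X$'' is not a loose end to be absorbed into the formalism: in characteristic $p$ it is the entire content of the theorem. As written, your argument establishes the equivalence only when $M_{\bar K/P}(X)=K_X$ (e.g.\ in characteristic zero). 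Note also that you have used a single instance of a hypothesis that quantifies over \emph{all} pairs $(F,L)$; a complete proof must either exploit that extra strength or prove directly that when $M_{\bar K/P}(X)$ is a field of definition it already equals $K_X$ — neither of which is done, or sketched, in your proposal.
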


Given a  smooth projective algebraic curve $Y$ defined over $L$, a branched covering  $\phi : X \rightarrow Y$ defined over $F$ and $\sigma\in\Aut(F/L)$, we  denote by $\phi^{\sigma}: X^{\sigma}\rightarrow Y^{\sigma}$ the branched covering obtained by applying $\sigma$ to the defining polynomials of $\phi$.

Assume now that a curve $L$ is a field of definition of a curve $X$ over $F$, i.e. there exists an isomorphism $g:X\to Y$, where $Y$ is a curve defined over $L$. If $\sigma\in \Aut(F/L)$, then $f_{\sigma}:=(g^{\sigma})^{-1}\circ g:X\to X^{\sigma}$ is an isomorphism (observe that $Y=Y^{\sigma}$)  and $f_{\tau\sigma}=f_{\sigma}^{\tau}\circ f_{\tau}$ holds for all $\sigma,\tau\in \Aut(F/L)$.
The following theorem by A. Weil shows that the latter condition is also sufficient  for the field $L$ to be a field of definition for $X$.

\begin{theorem}[Weil \cite{We}]\label{Wa}
Let $X$ be a smooth projective algebraic curve defined over a field $F$ and let $F/L$ be a Galois extension. 
If for every $\sigma\in \Aut(F/L)$ there is an isomorphism $f_{\sigma}: X\rightarrow X^{\sigma}$ defined over $F$ such that  the compatibility
condition $f_{\tau\sigma} = f_{\sigma}^{\tau}\circ f_{\tau} $ holds for all $\sigma, \tau\in \Aut(F/L)$, 
then there exist a smooth projective algebraic curve $Y$ defined
over $L$ and an isomorphism $g: X\rightarrow Y$ defined over $F$ such that $g^{\sigma}\circ f_{\sigma} = g$.
\end{theorem}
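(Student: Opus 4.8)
The plan is to prove this classical criterion of Weil by transporting the hypothesis to the function field of $X$, where it becomes an ordinary semilinear Galois action, and then applying Galois descent; I would organise the argument in four steps.

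\emph{Step 1: translation to function fields.} Let $E=F(X)$ be the function field of $X$, a geometrically integral curve, so that $E$ is a regular extension of $F$ of transcendence degree one; conversely, any such regular extension of $L$ is the function field of a smooth projective geometrically integral curve over $L$, unique up to isomorphism. For $\sigma\in\Gamma:=\Aut(F/L)$, composing the ``apply $\sigma$ to the coefficients'' ring isomorphism $F[X]\to F[X^{\sigma}]$ with the pull-back $f_{\sigma}^{*}\colon F[X^{\sigma}]\to F[X]$ produces a $\sigma$-semilinear ring automorphism $\psi_{\sigma}$ of $E$ with $\psi_{\sigma}|_{F}=\sigma$. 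A direct verification shows that Weil's compatibility condition $f_{\tau\sigma}=f_{\sigma}^{\tau}\circ f_{\tau}$ is equivalent to $\psi_{\tau\sigma}=\psi_{\tau}\circ\psi_{\sigma}$; in other words, $\sigma\mapsto\psi_{\sigma}$ is a homomorphism $\Gamma\to\Aut(E/L)$ lifting $\Gamma\to\Aut(F/L)$, i.e.\ a semilinear action of $\Gamma$ on $E$.

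\emph{Step 2: reduction to $F/L$ finite.} Since $X$ and the descent datum are of finite presentation and the datum is continuous in $\sigma$, a standard spreading-out argument produces a finite Galois subextension $F_{0}/L$ over which $X$, the curves $X^{\sigma}$, the isomorphisms $f_{\sigma}$, and hence the semilinear action of $\Gamma$, are all defined; so we may assume $F/L$ is finite Galois. \emph{Step 3: descent of the function field.} With $\Gamma$ finite acting semilinearly on the $F$-vector space $E$, Galois descent for vector spaces (Speiser's theorem) gives $E=E_{0}\otimes_{L}F$ with $E_{0}:=E^{\Gamma}$; in particular $E_{0}/L$ is finitely generated of transcendence degree one. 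It is regular: an element of $E_{0}$ algebraic over $L$ is algebraic over $F$, hence lies in $F$, hence in $F^{\Gamma}=L$, so $L$ is algebraically closed in $E_{0}$; and $E_{0}/L$ is separable because $E_{0}\otimes_{L}F=E$ is a field. Hence $E_{0}=L(Y)$ for a smooth projective geometrically integral curve $Y$ over $L$, unique up to isomorphism. \emph{Step 4: the isomorphism.} The inclusion $E_{0}\hookrightarrow E$ identifies $E$ with $F(Y\times_{L}F)$, giving an isomorphism $g\colon X\to Y$ over $F$; the fact that $E_{0}$ is exactly the fixed field of the $\psi_{\sigma}$ translates, after unwinding the definitions, into the desired identity $g^{\sigma}\circ f_{\sigma}=g$.

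I expect the delicate points to be two. The bookkeeping in Step~1 is easy to get wrong: passing to function fields is contravariant and the twist $X\mapsto X^{\sigma}$ has to be tracked with care, so that one genuinely obtains a left action with $\psi_{\sigma}|_{F}=\sigma$ and the cocycle identity in the form $\psi_{\tau\sigma}=\psi_{\tau}\psi_{\sigma}$. More substantively, Step~2 is where continuity of the descent datum is really used; after that, Steps~3 and~4 are the standard Galois-descent formalism (equivalently, faithfully flat descent along $\operatorname{Spec}F\to\operatorname{Spec}L$, whose effectivity for the curve follows from quasi-projectivity).
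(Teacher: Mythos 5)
The paper does not actually prove this statement: it is quoted from Weil's article \cite{We}, so there is no in-house argument to compare yours with. Taken on its own terms, your route --- turn the cocycle into a semilinear action $\sigma\mapsto\psi_\sigma$ of $\Aut(F/L)$ on the function field $E=F(X)$, descend $E$ by Speiser's theorem, and take the smooth projective model of the fixed field --- is the standard modern proof for curves, and Steps 1, 3 and 4 are essentially sound. (Two small remarks: the identity $\psi_{\tau\sigma}=\psi_\tau\circ\psi_\sigma$ does come out correctly from $f_{\tau\sigma}=f_\sigma^\tau\circ f_\tau$ with the convention $(X^\sigma)^\tau=X^{\tau\sigma}$, as you anticipated; but your separability argument for $E_0/L$ in Step 3 does not work in positive characteristic, since $E_0\otimes_LF$ being a field for $F/L$ separable algebraic says nothing about $E_0\otimes_LL^{1/p}$ --- it is repaired by noting that $E_0$ is a subextension of the separable extension $E/L$.)

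The genuine gap is Step 2. You assert that ``the datum is continuous in $\sigma$'', but continuity is neither a hypothesis of the statement nor automatic, and without it the reduction to a finite subextension --- on which everything else rests --- fails. In fact the statement as literally quoted is false for infinite Galois extensions. Take $X$ a genus-two curve $y^2=f(x)$ with $f\in\qq[x]$ generic, so $\Aut(X)=\{1,\iota\}$ with $\iota$ defined over $\qq$ and $X^\sigma=X$, $\iota^\sigma=\iota$ for all $\sigma$. The group $\Aut(\overline{\qq}/\qq)$ surjects continuously onto $\prod_p\zz/2$, which has index-two subgroups that are not open; pulling one back gives a non-continuous homomorphism $\chi\colon\Aut(\overline{\qq}/\qq)\to\{1,\iota\}$, and $f_\sigma:=\chi(\sigma)$ satisfies the cocycle condition. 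Yet any $g\colon X\to Y$ as in the conclusion is defined over some finite extension $M/\qq$, forcing $f_\sigma=\mathrm{id}$ for all $\sigma\in\Aut(\overline{\qq}/M)$ and hence $\ker\chi$ open --- a contradiction. So a correct proof must either restrict to finite Galois extensions (which covers every explicit application in this paper: $\cc/\rr$, $\qq(i)/\qq$, $\qq(a,b,c)/\qq(j_1,j_2,j_3)$) or add the hypothesis that $\sigma\mapsto f_\sigma$ is continuous, equivalently that $X$ and all the $f_\sigma$ are defined over a finite subextension of $F/L$; with that hypothesis your spreading-out step, and the rest of the argument, goes through. This defect is inherited from the statement as quoted, but your proof must name the hypothesis it is using rather than assert it in passing.
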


The following result by D\`ebes-Emsalem  is a consequence of Weil's theorem and provides a sufficient condition for the curve $X$ to be defined over the field $M_{F/L}(X)$ (see \cite[\S 2.4]{DD} for the definition of field of moduli of a covering).

\begin{theorem}[D\`ebes-Emsalem \cite{De}]\label{de}
Let $F/L$ be a Galois extension and $X$ be a smooth projective curve of genus $g\geq 2$ defined over $F$ with $L:=M_{F/L}(X)$. Then there exist a smooth projective  curve $B$ defined over $L$ and  a Galois branched covering $\phi: X\rightarrow B$ defined over $F$, with $\Aut(X)$ as its deck group, so that $M_{F/L}(\phi) = L$. 
Moreover, if $B$ contains at least one $L$-rational point outside of the branch locus of $\phi$, then $L$ is also a field of definition of $X$. 
\end{theorem}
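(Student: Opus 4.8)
The plan is to prove the two assertions separately: first construct the pair $(B,\phi)$ over the field of moduli $L$ and compute its field of moduli as a covering, and only afterwards use the rational point to descend the total space $X$. Since $g\ge 2$ the group $G:=\Aut(X)$ is finite, so I take $\phi\colon X\to B:=X/G$, the quotient map; this is a Galois branched covering defined over $F$ with deck group $G$. Because $L=M_{F/L}(X)$, the group $U_{F/L}(X)$ equals all of $\Aut(F/L)$, so for every $\sigma$ I may fix an isomorphism $f_\sigma\colon X\to X^\sigma$ over $F$. Such an $f_\sigma$ carries $\Aut(X)$ isomorphically onto $\Aut(X^\sigma)$, hence passes to the quotients and induces an isomorphism $\bar f_\sigma\colon B\to B^\sigma$. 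The crucial observation is that $\bar f_\sigma$ is \emph{canonical}: any two choices of $f_\sigma$ differ by an element of $G$, which acts trivially on $X/G=B$, so $\bar f_\sigma$ is independent of the chosen $f_\sigma$.

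This canonicity immediately yields the compatibility condition for free. Both $\bar f_{\tau\sigma}$ and $\bar f_\sigma^\tau\circ\bar f_\tau$ are the quotient maps induced by isomorphisms $X\to X^{\tau\sigma}$ (namely $f_{\tau\sigma}$ and $f_\sigma^\tau\circ f_\tau$), so by canonicity they coincide: $\bar f_{\tau\sigma}=\bar f_\sigma^\tau\circ\bar f_\tau$ for all $\sigma,\tau\in\Aut(F/L)$. Applying Weil's theorem \ref{Wa} to the family $(\bar f_\sigma)$ produces a curve $B_0$ over $L$ and an isomorphism $h\colon B\to B_0$ over $F$ with $h^\sigma\circ\bar f_\sigma=h$. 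Replacing $B$ by $B_0$ and $\phi$ by $h\circ\phi$, the base is now defined over $L$, so $B^\sigma=B$, and combining the defining square $\phi^\sigma\circ f_\sigma=\bar f_\sigma\circ\phi$ with $h^\sigma\circ\bar f_\sigma=h$ gives $\phi^\sigma\circ f_\sigma=\phi$. Thus each $f_\sigma$ is an isomorphism of coverings $\phi\to\phi^\sigma$ over the identity of $B$, whence $U_{F/L}(\phi)=\Aut(F/L)$ and $M_{F/L}(\phi)=L$. This establishes the first assertion.

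For the second part, suppose $B$ has an $L$-rational point $b_0$ outside the branch locus; the goal is to upgrade the equality $M_{F/L}(\phi)=L$ to definability of $X$ over $L$. After the normalization above the $f_\sigma\colon X\to X^\sigma$ are isomorphisms of coverings over $B$, determined only up to the deck group $G$, and the failure of the cocycle condition is recorded by $c(\sigma,\tau):=f_{\tau\sigma}^{-1}\circ f_\sigma^\tau\circ f_\tau\in G$, a $2$-cocycle whose class is the obstruction to applying Weil's theorem \ref{Wa} directly to $X$. I would use $b_0$ to kill this obstruction: restricting $\phi$ over the complement $B^\ast$ of the branch locus yields an \'etale $G$-covering, and the $L$-point $b_0$ splits the fundamental exact sequence $1\to\pi_1(B^\ast_{\bar L})\to\pi_1(B^\ast)\to\Aut(\bar L/L)\to 1$. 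This splitting lets the monodromy representation defining the covering be extended Galois-equivariantly, equivalently it trivializes the class of $c$ in $H^2(\Aut(F/L),Z(G))$, so the $f_\sigma$ can be renormalized to satisfy $f_{\tau\sigma}=f_\sigma^\tau\circ f_\tau$. A final application of Weil's theorem \ref{Wa} to $X$ then produces a model of $X$ defined over $L$.

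The main obstacle is precisely this last passage, from the field of moduli of the covering to a field of definition of $X$. The canonicity argument makes the descent of the base $B$ essentially formal, but the total space $X$ carries the genuine obstruction $[c]\in H^2(\Aut(F/L),Z(G))$, and the sole input available to annihilate it is the single rational point $b_0$. Making precise how $b_0$ furnishes the splitting (hence the trivialization), and verifying that the resulting renormalized $f_\sigma$ remain defined over $F$ and satisfy Weil's compatibility condition, is the technical heart of the proof and the step I expect to require the most care.
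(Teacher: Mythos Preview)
The paper does not supply a proof of this theorem: it is quoted verbatim as a result of D\`ebes--Emsalem \cite{De} and used as a black box in the proof of Theorem~\ref{sa2}. So there is no in-paper argument to compare your proposal against.

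That said, your outline is essentially the original D\`ebes--Emsalem argument. The first half is clean and correct: the key observation that the induced isomorphism $\bar f_\sigma\colon X/G\to (X/G)^\sigma$ is independent of the lift $f_\sigma$ is exactly what makes Weil's cocycle condition automatic on the quotient, and this is precisely how the canonical $L$-model $B$ of $X/\Aut(X)$ is produced in \cite{De}. Your identification of $M_{F/L}(\phi)=L$ after rebasing is also right.

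For the second half your sketch is on the right track but, as you yourself flag, the passage from ``$b_0$ is $L$-rational and unramified'' to ``the class of $c$ in $H^2$ vanishes'' is where the content lies. In \cite{De} this is done not by directly computing a $2$-cocycle in $Z(G)$ but by using the section $s\colon\mathrm{Gal}(\bar L/L)\to\pi_1(B^\ast,b_0)$ provided by the rational point to transport the geometric monodromy representation $\pi_1(B^\ast_{\bar L})\to G$ to a representation of the full arithmetic $\pi_1$; the resulting $G$-covering over $L$ is then a model of $X$. Your formulation via renormalizing the $f_\sigma$ is equivalent, but you should be aware that for nonabelian $G$ the bookkeeping (the outer action of $\mathrm{Gal}$ on $G$, and why the obstruction genuinely lands in $H^2$ with center coefficients) needs to be made precise; this is handled in the Grothendieck short exact sequence framework rather than by a bare Weil-cocycle computation.
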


%We recall the proof of the first part of the theorem. Let $\pi_X:X\rightarrow X/\Aut(X)$ be a Galois cover with $\Aut(X)$ as its deck group. For each $\sigma\in U_{F/L}(X)$ there is an isomorphism $f_{\sigma}:X\rightarrow X^{\sigma}$ and there exists a unique isomorphism $h_{\sigma}:X/\Aut(X)\rightarrow(X/\Aut(X))^{\sigma}$ such that $h_{\sigma}\circ \pi_X=\pi_X^{\sigma}\circ f_{\sigma}$. The isomorphisms $h_{\sigma}$ satisfy the conditions of Theorem \ref{Wa}, so that  there is a smooth projective  curve $B$ defined over $M_{F/L}(X)$ and an isomorphism $g:X/\Aut(X)\rightarrow B$ defined over $F$ such that $g=g^{\sigma}\circ h_{\sigma}$ for each $\sigma\in U_{F/L}(X)$. In this case $\phi=g\circ \pi_X:X\rightarrow B$ satisfies $M_{F/L}( \phi ) =  M_{F/L}(X)$.

 \begin{remark}\label{de1} The condition $L:=M_{F/L}(X)$ in Theorem \ref{de} is not restrictive since by \cite[Proposition 2.1]{De} the field of moduli relative to the extension $F/M_{F/L}(X)$ is $M_{F/L}(X)$.
\end{remark}

\section{Proof of the theorem}
Let $\phi: X\rightarrow X/G$ be a branched Galois covering between smooth projective curves and let $q_1,\cdots,q_r$ be its branch points. 
 The \emph{signature} of $\phi$ is defined as $(g_0; c_1,\cdots,c_r)$, where $g_0$ is the genus of  $X/G$ and $c_i$ is the ramification index of any point in $\phi^{-1}(q_i)$.
The \emph{branch divisor} of $\phi$, denoted by $D(\phi)$, is the divisor of $X/G$ defined by
$D(\phi)=\sum_{i=1}^rc_i q_i.$

\begin{definition}
 A smooth projective  curve $X$ of genus $g\geq2$ has \emph{odd signature} if the signature of the covering $\pi_X:X\rightarrow X/\Aut(X)$ is of the form $(0; c_1,\cdots, c_r)$ where some $c_i$ appears exactly an odd number of times.

\end{definition}

\begin{definition}
Let $B$ be a smooth projective curve defined over a field $L$. A divisor $D=p_1+ \cdots + p_r$ of $B$ is called $L$-\emph{rational} if for each $\sigma\in \Aut(\bar L/L)$ we have that $D^{\sigma}:=\sigma(p_1)+\cdots +\sigma(p_r)=D$.
\end{definition}
The following is an easy consequence of Riemann-Roch theorem and the fact that a curve of genus zero with a $L$-rational point is isomorphic to $\mathbb{P}^1(L)$ (see also  \cite[Lemma 4.0.4.]{hu1}).
\begin{lemma}\label{dkr}
Let $B$ be a smooth projective curve of genus $0$ defined over an infinite field $L$ and suppose that $B$ has an $L$-rational divisor $D$ of odd degree. Then $B$ has infinitely many $L$-rational points.
\end{lemma}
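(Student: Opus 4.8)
The plan is to reduce the statement to a standard consequence of the Riemann–Roch theorem on a genus zero curve. First I would use the hypothesis that $B$ has an $L$-rational divisor $D$ of odd degree, say $\deg D = 2m+1$. The canonical class $K_B$ on a genus zero curve has degree $-2$ and is itself $L$-rational (being intrinsically defined), so the divisor $D' := D + (m+1)K_B$ is again $L$-rational and has degree $(2m+1) + (m+1)(-2) = -1$. Replacing $D$ by $-D'$ (an $L$-rational divisor of degree $1$) or, more efficiently, considering $D'' := D + mK_B$ of degree $1$, we obtain an $L$-rational divisor of degree exactly $1$. The point here is that the parity trick turns "odd degree" into "degree one" precisely because $\deg K_B$ is even.

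Next I would show that an $L$-rational divisor $E$ of degree $1$ on $B$ forces $B$ to have an $L$-rational point. By Riemann–Roch on a curve of genus $0$, $\ell(E) = \deg E + 1 = 2$, so the complete linear system $|E|$ is nonempty and in fact one-dimensional; moreover, since $E$ is $L$-rational, the line bundle $\Osh_B(E)$ is defined over $L$ and the space $H^0(B, \Osh_B(E))$ is a two-dimensional $L$-vector space whose associated projective line of effective divisors is defined over $L$. Any effective divisor in $|E|$ has degree $1$, hence is a single point of $B$; choosing one that is the image of an $L$-rational point of $\pp(H^0(B,\Osh_B(E)))$ produces an $L$-rational point of $B$. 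Thus $B \cong \pp^1_L$ over $L$, and since $L$ is infinite, $\pp^1(L)$ has infinitely many points, giving the conclusion.

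The main obstacle, and the point requiring the most care, is the descent argument showing that the effective divisor representing the degree-one class can be chosen $L$-rational, i.e. that the linear system $|E|$ genuinely descends to $L$ rather than merely to some extension. This is where one invokes that $E$ is $L$-rational in the strong sense that $E^\sigma = E$ for all $\sigma \in \Aut(\bar L/L)$, so that $\Osh_B(E)$ carries a canonical $\Aut(\bar L/L)$-equivariant structure and Galois descent for quasi-coherent sheaves (or simply Weil's descent, Theorem \ref{Wa}, applied to the situation) yields a model of the linear system over $L$; since the linear system is a $\pp^1$ containing no Galois-stable obstruction, it has an $L$-point. For a smooth proof one can alternatively cite \cite[Lemma 4.0.4.]{hu1} directly for the passage from an odd-degree $L$-rational divisor to an $L$-rational point, and then only the elementary fact that $\pp^1(L)$ is infinite for infinite $L$ remains. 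I expect the write-up to be short, with the degree-parity reduction stated explicitly and the descent step either spelled out via Riemann–Roch and Galois-invariance of $H^0$ or deferred to the cited lemma.
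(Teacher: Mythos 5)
Your argument is correct and follows exactly the route the paper indicates (the paper gives no written proof, only the remark that the lemma follows from Riemann--Roch plus the fact that a genus-zero curve with an $L$-rational point is isomorphic to $\pp^1_L$, citing Huggins's Lemma 4.0.4): you shift $D$ by a multiple of the ($L$-rational, even-degree) canonical divisor to get an $L$-rational divisor of degree $1$, apply Riemann--Roch and Galois descent of the Riemann--Roch space to extract an $L$-rational point, and conclude $B\cong\pp^1_L$ has infinitely many $L$-points. The write-up is a faithful, more detailed version of the paper's intended proof.
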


 \begin{lemma}\label{branch}
Given a Galois branched covering $\phi:X\to X/G$ as before defined over $F$, we have $D(\phi^{\sigma})=D(\phi)^{\sigma}$ for any $\sigma\in \Aut(F/L)$.
\end{lemma}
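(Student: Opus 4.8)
The plan is to verify that ``conjugation by $\sigma$'' transports the local ramification data of $\phi$ to that of $\phi^{\sigma}$, and then to sum over branch points. We may assume $F$ is algebraically closed, which is the only case used later (via Theorems \ref{sqa} and \ref{de}); otherwise one passes to $\bar F$ and extends $\sigma$. Writing $Y:=X/G$, applying $\sigma$ to projective coordinates gives bijections $X(F)\to X^{\sigma}(F)$, $p\mapsto p^{\sigma}$, and $Y(F)\to Y^{\sigma}(F)$, $q\mapsto q^{\sigma}$, since $P(a)=0$ forces $P^{\sigma}(a^{\sigma})=0$. These are Zariski homeomorphisms; they satisfy $\phi^{\sigma}(p^{\sigma})=\phi(p)^{\sigma}$ because $\phi^{\sigma}$ is by definition obtained by applying $\sigma$ to the equations of $\phi$; and for each $p$ the automorphism $\sigma$ induces a ($\sigma$-semilinear) ring isomorphism $\Osh_{X,p}\cong\Osh_{X^{\sigma},p^{\sigma}}$ compatible with the analogous isomorphism $\Osh_{Y,\phi(p)}\cong\Osh_{Y^{\sigma},\phi^{\sigma}(p^{\sigma})}$.

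First I would record that $\phi^{\sigma}$ is again a Galois covering: conjugating by $\sigma$ the automorphisms of $X$ that generate $G$ yields a group $G^{\sigma}\cong G$ of automorphisms of $X^{\sigma}$ with quotient $Y^{\sigma}$, so the signature of $\phi^{\sigma}$ is defined. Next, the ramification index of $\phi$ at $p$ is the order of vanishing at $p$ of the pullback along $\phi$ of a uniformizer at $\phi(p)$; since the local isomorphisms above intertwine $\phi^{*}$ with $(\phi^{\sigma})^{*}$ and the valuation at $p$ with the valuation at $p^{\sigma}$, they give $e_{\phi^{\sigma}}(p^{\sigma})=e_{\phi}(p)$ for every $p\in X(F)$. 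In particular $q\in Y(F)$ is a branch point of $\phi$ if and only if $q^{\sigma}$ is a branch point of $\phi^{\sigma}$, and then the common ramification index over $q$ (well defined by the Galois hypothesis on both sides) equals the common ramification index over $q^{\sigma}$.

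Finally, writing $D(\phi)=\sum_{i=1}^{r}c_{i}q_{i}$ with $q_{1},\dots,q_{r}$ the branch points of $\phi$, the previous paragraph shows that the branch points of $\phi^{\sigma}$ are exactly $q_{1}^{\sigma},\dots,q_{r}^{\sigma}$ with associated ramification indices $c_{1},\dots,c_{r}$, whence
\[D(\phi^{\sigma})=\sum_{i=1}^{r}c_{i}\,q_{i}^{\sigma}=\Big(\sum_{i=1}^{r}c_{i}q_{i}\Big)^{\sigma}=D(\phi)^{\sigma}.\]
The only point requiring care is the compatibility of $\sigma$ with the local structure of the two coverings; the cleanest way to make it precise is to note that $\sigma$ extends to isomorphisms of function fields $F(Y)\to F(Y^{\sigma})$ and $F(X)\to F(X^{\sigma})$ commuting with the inclusions induced by $\phi$ and $\phi^{\sigma}$, and that ramification indices are computed inside these function fields via the discrete valuations, which are permuted accordingly. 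Everything after this setup is formal bookkeeping rather than a genuine difficulty.
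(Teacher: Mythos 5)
Your proof is correct and follows essentially the same route as the paper: both rest on the commutation relation $\sigma\circ\phi=\phi^{\sigma}\circ\sigma$ and the resulting correspondence of branch points and ramification indices. The only difference is cosmetic: the paper checks that ramification indices match by comparing fiber cardinalities (which suffices since the covering is Galois), while you verify it via the induced $\sigma$-semilinear isomorphisms of local rings and valuations.
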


\begin{proof}
Observe that  $\sigma\circ \phi=\phi^{\sigma}\circ \sigma$, where we denote by $\sigma$ the bijection acting as $\sigma$ on the coordinates of the points of $X$ and $X/G$. 
Thus $q_i$ belongs to the support of $D(\phi)$ if and only if  $\sigma(q_i)$ is in the support of $D(\phi^{\sigma})$ and the fibers over the two points have the same cardinality.
 \end{proof}

The proof of Theorem \ref{sa} follows from Theorem \ref{sqa} and the following result.
\begin{theorem}\label{sa2}
 Let $X$ be a smooth projective  curve  of genus $g\geq2$ defined over an algebraically closed field $F$ and let $L\subset F$ be a subfield such that $F/L$ is Galois. If $X$ is an odd signature curve, then $M_{F/L}(X)$ is a field of definition for $X$.
\end{theorem}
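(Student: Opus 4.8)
The plan is to apply the D\`ebes--Emsalem theorem (Theorem~\ref{de}) and use the odd signature hypothesis to verify its extra hypothesis, namely the existence of an $L$-rational point on the base curve $B$ outside the branch locus. By Remark~\ref{de1} we may assume $L = M_{F/L}(X)$ without loss of generality. Theorem~\ref{de} then gives a smooth projective curve $B$ defined over $L$ together with a Galois branched covering $\phi\colon X \to B$ defined over $F$ whose deck group is $\Aut(X)$ and with $M_{F/L}(\phi) = L$; in particular, since $X$ is an odd signature curve, $B$ has genus $0$. It remains to produce an $L$-rational point of $B$ off the branch locus, and then Theorem~\ref{de} yields that $L$ is a field of definition.

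The first key step is to show that the branch divisor $D(\phi)$ of $\phi$ is $L$-rational. The covering $\phi$ has the same signature $(0; c_1,\dots,c_r)$ as $\pi_X$, since over $F$ (algebraically closed) the quotient $X/\Aut(X)$ is determined up to isomorphism and $\phi$ is the canonically associated quotient map. Because $M_{F/L}(\phi) = L$, for every $\sigma \in \Aut(F/L)$ there is an isomorphism carrying $\phi$ to $\phi^{\sigma}$ (compatibly with an automorphism of $B$, which is defined over $L$), and hence $D(\phi^{\sigma}) = D(\phi)$ up to the identification $B = B^{\sigma}$. Combining this with Lemma~\ref{branch}, which gives $D(\phi^{\sigma}) = D(\phi)^{\sigma}$, we conclude $D(\phi)^{\sigma} = D(\phi)$ for all $\sigma$, i.e. $D(\phi)$ is $L$-rational.

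The second key step is to extract from $D(\phi) = \sum_i c_i q_i$ an $L$-rational sub-divisor of odd degree. Here the odd signature hypothesis enters: group the branch points according to the value of their ramification index, writing $D(\phi) = \sum_{c} c\,E_c$ where $E_c$ is the sum of the branch points with ramification index exactly $c$. Each $E_c$ is itself $L$-rational, since $\sigma$ permutes branch points preserving ramification indices (again by Lemma~\ref{branch} applied to the $\sigma$ realizing $M_{F/L}(\phi)=L$). By hypothesis there is a value $c = c_0$ such that $E_{c_0}$ consists of an odd number of points, so $E_{c_0}$ is an $L$-rational divisor of odd degree on the genus-$0$ curve $B$. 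If $L$ is infinite, Lemma~\ref{dkr} then provides infinitely many $L$-rational points of $B$; discarding the finitely many that lie in the branch locus leaves an $L$-rational point outside it, and Theorem~\ref{de} finishes the argument.

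The main obstacle is the case in which $L$ is a finite field, where Lemma~\ref{dkr} does not directly apply. However, an $L$-rational divisor of odd degree on a genus-$0$ curve $B/L$ still forces $B \cong \mathbb{P}^1_L$ (a standard consequence of Riemann--Roch, since the canonical class has even degree $-2$, so an odd-degree rational divisor produces an odd-degree, hence degree-$1$ after adjustment, effective rational divisor, i.e. an $L$-rational point), and $\mathbb{P}^1$ over any field — finite or infinite — has points outside any prescribed finite set once one passes to the guaranteed rational point and notes $\mathbb{P}^1(L)$ is nonempty; if $L$ is too small to avoid the branch locus one may first reduce to this situation or invoke that a genus-$0$ curve with a rational point over a finite field has arbitrarily many points over finite extensions, but in fact for the conclusion of Theorem~\ref{de} it suffices to have the single rational point structure. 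I would therefore handle the infinite-field case cleanly via Lemma~\ref{dkr} as above, and treat the finite-field case by the direct $\mathbb{P}^1_L$ identification, checking that a point off the branch locus can always be found (enlarging nothing, since $B(L) = \mathbb{P}^1(L)$ is already infinite when $|L| = \infty$ and otherwise one argues on $\mathbb{P}^1$ directly).
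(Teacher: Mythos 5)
Your argument for the case of infinite $L$ is correct and essentially identical to the paper's: reduce to $L=M_{F/L}(X)$ via Remark \ref{de1}, apply Theorem \ref{de} to obtain the $L$-model $B$ and the covering $\phi$, combine Lemma \ref{branch} with the isomorphisms $X\to X^{\sigma}$ to see that $D(\phi)$ is an $L$-rational divisor, extract from the odd signature an $L$-rational divisor of odd degree supported on the branch points sharing the ramification index that occurs an odd number of times, and finish with Lemma \ref{dkr} and Theorem \ref{de}. Up to that point there is nothing to object to.

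The genuine gap is the finite-field case. Your assertion that ``for the conclusion of Theorem \ref{de} it suffices to have the single rational point structure'' misreads Theorem \ref{de}: it requires an $L$-rational point \emph{outside the branch locus} of $\phi$. Over $L=\mathbb{F}_q$ the identification $B\cong\mathbb{P}^1_L$ produces only $q+1$ rational points, and nothing prevents all of them from lying in the branch locus when $q$ is small compared with the number of branch points (already for $q=2$ three rational branch points would exhaust $\mathbb{P}^1(\mathbb{F}_2)$). Passing to finite extensions of $L$, as you suggest, changes the field and so proves nothing about definability over $L$ itself. The paper does not try to salvage the D\`ebes--Emsalem route here: it disposes of the finite case by citing \cite[Corollary 2.11]{hu2}, which asserts that a curve over the algebraic closure of a finite field is always definable over its field of moduli (a fact resting on the structure of the absolute Galois group of a finite field, not on the geometry of $B$). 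You need either that citation or an independent argument of that kind; the $\mathbb{P}^1_L$ argument alone does not close the case.
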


\begin{proof}
By Remark \ref{de1} we can assume that $M_{F/L}(X)=L$.
By Theorem \ref{de} there exists a canonical $L$-model $B$ of $X/\Aut(X)$ and a commutative diagram:
\[
\xymatrix@1{
X\ar[d]_-{\pi_X}\ar[rr]^-{f_{\sigma}} & & X^{\sigma}\ar[d]^-{\pi_X^{\sigma}}\\
X/\Aut(X)\ar[rr]^-{h_{\sigma}}\ar[rd]_-{g} & & (X/\Aut(X))^{\sigma}\ar[ld]^-{g^{\sigma}}\\
& B &
}
\]
where $\sigma\in \Aut(F/L)$ (this coincides with $U_{F/L}$ by \cite[Proposition 2.1]{De}) and $f_{\sigma}, h_{\sigma}, g$  are isomorphisms.
Let $\phi=g\circ \pi_X$. The fact that $f_{\sigma}$ is an isomorphism and Lemma \ref{branch} imply that $D(\phi)=D(\phi^{\sigma})=D(\phi)^{\sigma}$, i.e. $D(\phi)$ is an $L$-rational divisor. Also, as $g$ is an isomorphism,  $D(\phi)=g(D(\pi_X))$ and  $\phi$ has the same signature of $\pi_X$.  If $q_{1}, \cdots, q_{2k+1}$ are the points in the support of $D(\phi)$ with the same coefficient $c_i$, then the divisor $q_1+\cdots+q_{2k+1}$ is an $L$-rational divisor of odd degree.

If $L$ is infinite this implies, by Lemma \ref{dkr}, that $B$ has an $L$-rational point outside of the branch locus of $\phi$, thus $X$ can be defined over $L$ by Theorem \ref{de}.
In case $L$ is finite the result follows from \cite[Corollary 2.11]{hu2}. \end{proof}

\section{Cyclic $q$-gonal curves}

Let $F$ be an algebraically closed field of characteristic $p\not=2$ and let $X$ be an algebraic curve of genus $g\geq2$ defined over $F$.
 If the automorphism group  of  $X$  contains a cyclic subgroup $C_q$, where $q$ is a prime number, such that  $X/C_q$ has genus zero, then the curve is called a  \emph{cyclic} $q$-\emph{gonal curve}.
 If in addition $C_q$ is normal in  $\Aut(X)$, then $X$ is called a \emph{normal cyclic} $q$-\emph{gonal curve}. In this case the \emph{reduced automorphism group}  $\overline{\Aut(X)} := \Aut(X)/C_q$ 
 is isomorphic to a finite subgroup of  $\PGL_2(F)$. 
 %hence it is isomorphic to one of the following groups \cite{We}: $C_n, D_{2n}, A_4, S_4, A_5$. 
  
 In case $\overline{\Aut(X)}$ is not cyclic B. Huggins \cite[Theorem 5.3]{hu2} and A. Kontogeorgis \cite[Proposition 3.2]{Ko} proved the following theorem.
  \begin{theorem}\label{Gu1}
Let $K$ be a perfect field of characteristic $p\not=2$ and let $F$ be an algebraic closure of $K$. Let $X$ be a normal cyclic $q$-gonal curve over $F$ such that $\overline{\Aut(X)}$ is not cyclic or that $\overline{\Aut(X)}$ is cyclic of order divisible by $p$. Then $X$ can be defined over its field of moduli relative to the extension $F/K$.
\end{theorem}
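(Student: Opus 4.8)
The plan is to reduce this statement to Theorem \ref{sa2} (equivalently Theorem \ref{sa}) by showing that a normal cyclic $q$-gonal curve $X$ whose reduced automorphism group $\overline{\Aut(X)}$ is not cyclic — or is cyclic of order divisible by $p$ — is automatically an odd signature curve, i.e. that the covering $\pi_X: X \to X/\Aut(X)$ has signature of type $(0; c_1, \dots, c_r)$ in which some ramification index occurs an odd number of times. First I would record that since $C_q \trianglelefteq \Aut(X)$ and $X/C_q \cong \pp^1$, the quotient $X/\Aut(X) = (X/C_q)/\overline{\Aut(X)}$ is a quotient of $\pp^1$ by a finite subgroup of $\PGL_2(F)$, hence has genus $0$; so the signature of $\pi_X$ does have the shape $(0; c_1, \dots, c_r)$ and only the "odd multiplicity" condition needs to be verified. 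The case where $F$ has characteristic $0$ (or $p \nmid |\overline{\Aut(X)}|$) is the cleanest: there $\overline{\Aut(X)}$ is cyclic, dihedral, $A_4$, $S_4$, or $A_5$, and when it is not cyclic one reads off from the classical list of signatures of $\pp^1 \to \pp^1/\overline{\Aut(X)}$ — namely $(0;2,2,n)$, $(0;2,3,3)$, $(0;2,3,4)$, $(0;2,3,5)$ — that there is always an index appearing exactly once (the $n \geq 3$ in the dihedral case, and the $4$, resp.\ one of the $3$'s, resp.\ the $5$ in the polyhedral cases; for $(0;2,3,3)$ the index $2$ appears once).

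The main work is to pass from the signature of $\pp^1 \to \pp^1/\overline{\Aut(X)}$ to the signature of $\pi_X: X \to X/\Aut(X)$, tracking how each branch point and its ramification index behaves under the degree-$q$ map $X \to X/C_q = \pp^1$ and then under $\pp^1 \to X/\Aut(X)$. Over a branch point $q_i$ of $\pp^1 \to X/\Aut(X)$ with local index $e_i$ (from the list above), the fibre in $X$ has ramification index either $e_i$ or $qe_i$ depending on whether the points of $\pp^1$ above $q_i$ are branch points of $X \to \pp^1$; since $q$ is prime and there are only finitely many branch points of $X \to \pp^1$, the combinatorics are controlled, and one checks that an index occurring an odd number of times downstairs, or the prime $q$ itself, survives as an index of odd multiplicity upstairs. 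I would treat the finitely many group-theoretic cases for $\overline{\Aut(X)}$ in turn; this bookkeeping — making sure no accidental coincidence of ramification indices in $X$ destroys the odd multiplicity — is the step I expect to be the main obstacle, and it is essentially the content that must be checked by hand.

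Finally, the case $p \mid |\overline{\Aut(X)}|$ with $\overline{\Aut(X)}$ cyclic must be handled separately, since then $\pp^1 \to \pp^1/\overline{\Aut(X)}$ is wildly ramified and the tame signature list does not apply. Here $\overline{\Aut(X)}$ cyclic of order divisible by $p$ forces, by the structure of finite cyclic subgroups of $\PGL_2(F)$ in characteristic $p$, that the cover $\pp^1 \to \pp^1/\overline{\Aut(X)}$ is branched over exactly one or two points, with one totally (wildly) ramified point; propagating this through $X \to X/C_q \to X/\Aut(X)$ again yields a ramification index of odd multiplicity (indeed one occurring exactly once). In all cases $X$ is an odd signature curve, so Theorem \ref{sa2} applies with $F$ the given algebraic closure and $L = K$: since $K$ is perfect, $M_{F/K}(X)$ is the field of moduli of $X$ relative to $F/K$, and it is a field of definition for $X$, which is the assertion. \qed
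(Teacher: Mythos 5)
This theorem is not proved in the paper at all: it is quoted from Huggins \cite[Theorem 5.3]{hu2} and Kontogeorgis \cite[Proposition 3.2]{Ko}, whose arguments run through Weil's cocycle criterion and the classification of finite subgroups of $\PGL_2$, not through the odd-signature criterion. So the first thing to say is that you are proving a cited result, and the second is that your proposed reduction does not work.

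The gap is in the key claim that a normal cyclic $q$-gonal curve with non-cyclic $\overline{\Aut(X)}$ is automatically an odd signature curve; this is false, and you yourself flag the bookkeeping step as the one you have not carried out. Concretely, take $q=3$ and $X:\ y^3=\prod_{j=1}^6(x-b_j)$ where $\{b_1,\dots,b_6\}$ is a generic orbit of the dihedral group $D_3\cong S_3\subset\PGL_2(\cc)$ generated by $x\mapsto\zeta_3x$ and $x\mapsto 1/x$. Then $\overline{\Aut(X)}\cong S_3$ is non-cyclic and $|\Aut(X)|=18$. The two $S_3$-orbits of size $3$ (the reflection-fixed points $\{\pm1,\pm\zeta_3,\pm\zeta_3^2\}$) are generically unramified in $X\to\pp^1$ and give two branch points of $\pi_X$ of index $2$; the orbit $\{0,\infty\}$ is unramified in $X\to\pp^1$ and gives one branch point of index $3$; the orbit $\{b_j\}$ gives one branch point of index $3$. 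Hence the signature of $\pi_X$ is $(0;2,2,3,3)$ (Riemann--Hurwitz checks: $g=4$), in which every index occurs an even number of times, so $X$ is not an odd signature curve even though Theorem \ref{Gu1} applies to it. The same phenomenon occurs for $q=2$ with $\overline{\Aut(X)}\cong A_4$ and signature $(0;2,2,3,3)$. This is precisely why the paper treats the normal case with non-cyclic reduced group by citation rather than by Theorem \ref{sa}, reserving the odd-signature method for the cyclic reduced group case (Corollary \ref{h1}) and the non-normal case (Corollary \ref{h2}). A secondary issue: in characteristic $p>0$ the non-cyclic finite subgroups of $\PGL_2(F)$ include groups such as $\PSL_2(\ff_{p^r})$ and elementary abelian $p$-groups, so even your ``classical list of signatures'' step is incomplete there; but the counterexample above already occurs in characteristic zero.
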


In case $\overline{\Aut(X)}$ is cyclic of order $n$ and $p=0$, then $X$ is isomorphic to a curve with equation $y^q=f(x)$, where $f$ is as given in Table \ref{table1}.
Observe that $\overline{\Aut(X)}$ is generated by $\nu(x)=\zeta_nx$, where $\zeta_n$ is a primitive $n$-th root of unity.
The three cases in Table \ref{table1}  differ by the number $N$ of branch points of the cover $X\to X/C_q$ fixed by $\nu$.

\begin{corollary}\label{h1}
Let $X$ be a normal cyclic $q$-gonal curve of genus $g\geq 2$ defined over a field $K$ of characteristic zero such 
that $\overline{\Aut(X)}$ is cyclic of order $n\geq 2$ and let  $N$ be as above.
If either $N=1$, or $N=0$ and $\frac{2g-2+2q}{n(q-1)}$ is odd, or $N=2$ and $\frac{2g}{n(q-1)}$ is odd,
then $X$ is definable over $K_X$.
\end{corollary}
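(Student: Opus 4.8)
The goal is to deduce Corollary \ref{h1} from Theorem \ref{sa} by showing that, under each of the three numerical hypotheses, the curve $X$ is an odd signature curve, i.e. the signature of $\pi_X : X \to X/\Aut(X)$ has some value appearing an odd number of times. The strategy is to compute this signature explicitly from the normal form $y^q = f(x)$ together with the action of $\overline{\Aut(X)} = \langle \nu \rangle$ with $\nu(x) = \zeta_n x$. We already know the signature of the intermediate cover $X \to X/C_q$: its branch points are the zeros of $f$ (and possibly $x=0$, $x=\infty$), each with ramification index $q$. The quotient $X/C_q \cong \pp^1$ then carries the action of $\overline{\Aut(X)} = C_n$, and $X/\Aut(X) = (X/C_q)/C_n$; composing the two covers and using the Riemann–Hurwitz formula (or directly tracking orbits of $\nu$ on $\pp^1$ and the ramification of $C_q$) yields the signature of $\pi_X$.

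\medskip
First I would set up the geometry: $X/C_q \cong \pp^1$ with coordinate $x$, and $\nu$ acting as $x \mapsto \zeta_n x$, which has exactly two fixed points, $0$ and $\infty$, and all other orbits of size $n$. The branch locus $B$ of $X \to X/C_q$ is a $\overline{\Aut(X)}$-invariant set of points in $\pp^1$; the integer $N \in \{0,1,2\}$ records how many of $\{0,\infty\}$ lie in $B$. The remaining branch points fall into free $C_n$-orbits; say there are $m$ of them, so $|B| = N + mn$. Each branch point of $X \to X/C_q$ has index $q$. Now I would determine the ramification of $\pi_X$ over each point of $X/\Aut(X) = \pp^1/C_n$: over the image of a free orbit not in $B$ one gets index $n$; over the image of a free orbit inside $B$ one gets index $nq$ (or $q$ and $n$ combining — here I must be careful about whether the relevant extension is $C_{nq}$ or not, which is where the normality of $C_q$ and the structure of $\Aut(X)$ as an extension of $C_n$ by $C_q$ enters); over the image of a fixed point $0$ or $\infty$ one gets index $n$ if it is not in $B$ and $nq$ if it is. Assembling these, the signature of $\pi_X$ will have the form $(0; \text{(some $n$'s)}, \text{(some $nq$'s)})$, and the count of each is governed by $N$ and $m$.

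\medskip
Next I would relate $m$ (equivalently $|B|$) to the genus $g$ via Riemann–Hurwitz applied to $X \to X/C_q \cong \pp^1$: $2g - 2 = q(-2) + |B|(q-1)$, so $|B| = \frac{2g - 2 + 2q}{q-1}$. Hence $N + mn = \frac{2g-2+2q}{q-1}$, giving $m = \frac{2g-2+2q - N(q-1)}{n(q-1)}$. Plugging in $N=0,1,2$ recovers precisely the quantities $\frac{2g-2+2q}{n(q-1)}$, $\frac{2g-3+q}{n(q-1)}$, $\frac{2g}{n(q-1)}$ appearing (or implicit) in the statement. Then the three cases are checked one at a time: if $N=1$, there is exactly one branch point of $\pi_X$ coming from the ramified fixed point, and—provided its ramification index differs from the indices contributed by the free orbits and the other fixed point—it appears an odd (namely one) number of times, so $X$ has odd signature unconditionally. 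If $N=0$, the fixed points $0,\infty$ contribute two equal indices $n$, and the $m$ free-orbit branch points contribute $m$ equal indices $nq$; oddness of $m$ then makes the value $nq$ appear an odd number of times. If $N=2$, the analysis is symmetric: the two fixed points now contribute two indices $nq$ and the free branch orbits contribute $m$ indices $nq$ as well, so the value $nq$ appears $m+2$ times — here one must instead look at the count of a different index, or recount; I expect the correct bookkeeping is that with $N=2$ one uses $m$ itself (the non-fixed contributions) and the hypothesis is on $\frac{2g}{n(q-1)}$, which equals $m$ when $N=2$. The delicate point throughout is ensuring the index values that are supposed to be ``rare'' are genuinely distinct from all the others, so that an odd \emph{count} of one value cannot be cancelled by merging with another value of the same size.

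\medskip
The main obstacle I anticipate is precisely this last issue: correctly computing the ramification indices of the composite cover $\pi_X$ over the images of the special points, which requires knowing the structure of $\Aut(X)$ as a group extension $1 \to C_q \to \Aut(X) \to C_n \to 1$ and how stabilizers interact — in particular whether a point of $X$ lying over a ramified fixed point of $\nu$ has stabilizer of order $nq$ (a cyclic or non-cyclic group of that order) or something smaller, and whether $f$ can be arranged so that $0$ and $\infty$ are or are not in the branch locus. This is exactly the content encoded in Table \ref{table1} and the definition of $N$, so the proof should reduce to reading off the three normal forms and mechanically applying Riemann–Hurwitz; but verifying that the ``odd-multiplicity'' value is distinct from the others in each case is the step that needs genuine care rather than routine calculation.
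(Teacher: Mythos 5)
Your overall strategy is exactly the paper's: exhibit the signature of $\pi_X$, check that under each numerical hypothesis some index occurs an odd number of times, and invoke Theorem \ref{sa}. The paper does not re-derive the signature, however --- it reads it off from Table \ref{qgonal} --- whereas you reconstruct it from the tower $X\to X/C_q\to X/\Aut(X)$, and it is precisely there that your computation goes astray. Over the image in $X/\Aut(X)$ of a \emph{free} $C_n$-orbit contained in the branch locus of $X\to X/C_q$, the ramification index of $\pi_X$ is $q$, not $nq$: a point $p\in X$ above such an orbit has stabilizer containing $C_q$ (total ramification of the $q$-gonal map), and the image of ${\rm Stab}(p)$ in $\overline{\Aut(X)}=\langle\nu\rangle$ must fix the corresponding point of $\pp^1$, which lies outside $\{0,\infty\}$, the only fixed points of $\nu$; hence ${\rm Stab}(p)=C_q$ and the index is $q$. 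Only the points of $X$ over $0$ and $\infty$ pick up the factor $n$: index $nq$ if that point is a branch point of $X\to X/C_q$, index $n$ if not. The correct signatures are thus $(0;n,n,q,\dots,q)$, $(0;n,nq,q,\dots,q)$, $(0;nq,nq,q,\dots,q)$ for $N=0,1,2$, with the number of $q$'s equal to $\frac{2g-2+2q-N(q-1)}{n(q-1)}$ by your (correct) Riemann--Hurwitz count on $X\to X/C_q$. (Incidentally, for $N=1$ that count is $\frac{2g+q-1}{n(q-1)}$, not $\frac{2g-3+q}{n(q-1)}$, though nothing depends on it since the single $nq$ already gives odd signature there.)

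This misassignment of indices is what produces the muddle in your $N=2$ discussion (``the value $nq$ appears $m+2$ times \dots one must instead look at the count of a different index, or recount''): with the corrected indices the two $nq$'s are an even contribution and the $q$'s appear $m=\frac{2g}{n(q-1)}$ times, which is exactly the hypothesis. As it happens, your wrong indices would still yield the right parity in every case (since $m+2\equiv m\pmod 2$), and the possible coincidence $n=q$ does not disturb the counts either, so the conclusion is not actually in danger --- but the step you yourself single out as the one ``that needs genuine care'' is left both unresolved and, where you do commit to an answer, incorrect. To close the gap, either carry out the stabilizer computation above or do what the paper does and quote the known signature table for normal cyclic $q$-gonal curves with cyclic reduced automorphism group.
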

 
\begin{proof}
 The signature of  the covering $\pi_X:X\to X/\Aut(X)$  is given in Table \ref{qgonal}.
 If $N=1$ then clearly $X$ has odd signature. Otherwise, if $N=0$, the number of branch points 
 with ramification index $q$ equals  $\frac{2g-2+2q}{n(q-1)}$ by the Riemann-Hurwitz formula,
 thus again $X$ has odd signature. Similarly for $N=2$.
 Thus the result follows from  Theorem \ref{sa}.
\end{proof}

\begin{table}[h] \label{table1}
$$\begin{tabular}{|c|c|c|c|c|c|}
\hline
$N$   &  signature of $\pi_X$ &  $f(x)$ \\
\hline
0  &$(0;n, n, q,\ldots,q)$ & $x^{nt}+\cdots +a_ix^{n(t-i)}+\cdots + a_{t-1}x^n+1$\\[2pt]
&     & where $q|nt$\\
\hline
  1  &$(0;n, nq, q,\ldots,q)$ & $x^{nt}+\cdots +a_ix^{n(t-i)}+\cdots + a_{t-1}x^n+1$\\[2pt]
 &     & where $q\not|nt$\\
\hline
 2   &$(0;nq, nq,  q,\ldots,q)$ & $x(x^{nt}+\cdots +a_ix^{n(t-i)}+\cdots + a_{t-1}x^n+1)$\\[2pt]
 &   & where $q\not| nt+1$\\
\hline
\end{tabular}$$
\vspace{0.2cm}

\caption{Cyclic $q$-gonal curves with $\overline{\Aut(X)}=C_n$} \label{qgonal}
\end{table}
\vspace{-0.3cm}
 
We will now construct examples of cyclic $q$-gonal curves not definable over their field of moduli  following \cite{hu1, hu2}. Let $m,n>1$ be two integers,
$a_1,\dots,a_m\in \mathbb{C}$ and consider the polynomial  
\begin{equation}\label{Ec2}
f(x):=\prod_{1\leq i \leq m}(x^n-a_i)(x^n+1/\bar a_i).
\end{equation}

We will look for such an $f$ with the following properties: $|a_i|\neq|a_j|$  if $i\neq j$, 
$a_i/\bar a_i\neq a_j/\bar a_j$ if $i\neq j$, $|a_i|\neq |1/a_j|$ for all $i,j$, $f(0)=-1$.
Moreover, if $n=3$ we ask that  the following automorphism does not map the zero set of $f$ into itself:
 $$\tau:\ x\mapsto \frac{-(x-\sqrt{3}-1)}{x(\sqrt{3}-1)+1}.$$
 We observe that such polynomials exist for any $m,n$:  
 for $n\not=3$ we can consider
 $$f(x)=\prod_{1\leq l\leq m}(x^n-(l+1)\kappa ^l)(x^n+\frac{\kappa^l}{l+1}),$$
and for $n=3$ the polynomial:
 $$f(x)=(x^3-\alpha^3)(x^3+\frac{1}{\alpha^3})\prod_{1\leq l\leq m-1}(x^3-(l+1)\kappa ^l)(x^3+\frac{\kappa^l}{l+1}),$$
  where $\kappa$ is a primitive $m$-th root of $(-1)^{m-1}$ and $\alpha=-(2+\sqrt 3)$ (observe that  $\tau(\alpha)=\alpha$).

 \begin{lemma}\label{Ha1}
Let $X$ be a cyclic $q$-gonal curve over $\mathbb{C}$ given by $y^q=f(x)$, where $f$ is as in (\ref{Ec2}) and satisfies the properties mentioned above. Then:
\begin{enumerate}[i)]
\item $\Aut(X)$ is generated by $\iota(x, y)=(x, \zeta_q y)$  and $\nu(x, y)=(\zeta_n x, y)$;
\item the signature of $\pi_X$ is $(0; q,\ldots,q, n, n)$  if $q|2mn$ and $(0; q,\ldots,q, n, qn)$ otherwise, where $q$ appears $2m$-times. 
\end{enumerate}
\end{lemma}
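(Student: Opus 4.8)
The plan is to analyze the curve $X\colon y^q=f(x)$ directly in coordinates, exploiting the hypotheses on $f$ to pin down $\Aut(X)$ and then read off the signature from the Riemann--Hurwitz formula. First I would establish that $\iota$ and $\nu$ are indeed automorphisms: $\iota$ is clear since $f$ does not depend on $y$, and $\nu$ is an automorphism because $f(x)$ is a polynomial in $x^n$ (each factor $x^n-a_i$ and $x^n+1/\bar a_i$ is $\nu$-invariant). So $G_0:=\langle\iota,\nu\rangle\subseteq\Aut(X)$, with $\iota$ central of order $q$, $\nu$ of order $n$, and $G_0\cong C_q\times C_n$. Note $\langle\iota\rangle$ is the (normal, since it is the unique subgroup acting with genus-zero quotient via the obvious $q$-gonal map, or because it is characterized as the kernel of the action on the $x$-line) cyclic subgroup exhibiting $X$ as a normal cyclic $q$-gonal curve, and $\overline{G_0}=\langle\bar\nu\rangle\cong C_n$ inside $\PGL_2(\cc)$ acting as $x\mapsto\zeta_n x$.

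Next, the reverse inclusion $\Aut(X)\subseteq G_0$, which I expect to be the main obstacle. The strategy: since $\langle\iota\rangle$ is normal in $\Aut(X)$, there is an induced action of $\overline{\Aut(X)}$ on $X/\langle\iota\rangle\cong\pp^1$ (the $x$-line), so $\overline{\Aut(X)}$ is a finite subgroup of $\PGL_2(\cc)$ preserving the branch divisor of $X\to X/\langle\iota\rangle$, namely the divisor $\mathrm{div}(f)_0+\,(\text{possibly }\infty)$ determined by the zeros of $f$ together with the point at infinity according to whether $q\mid\deg f=2mn$. The conditions $|a_i|\neq|a_j|$, $a_i/\bar a_i\neq a_j/\bar a_j$, $|a_i|\neq|1/a_j|$, and $f(0)=-1$ are designed exactly to kill all extra Möbius symmetries: a nontrivial element of $\PGL_2(\cc)$ normalizing $\langle\bar\nu\rangle$ and preserving the zero set either must be a rotation $x\mapsto\zeta x$ (and then $\zeta\in\langle\zeta_n\rangle$, by looking at the moduli $|a_i|$ of the orbits), or an inversion-type map swapping $0\leftrightarrow\infty$ such as $x\mapsto c/x$ — but the asymmetry between the radii $|a_i|$ and $|1/a_j|$, plus the normalization $f(0)=-1$, rules these out; the special case $n=3$, where $\overline{\Aut(X)}$ could a priori be enlarged to $A_4$, $S_4$ or $A_5$ via the exceptional element $\tau$, is excluded by the standing hypothesis that $\tau$ does not preserve the zero set of $f$ (and one checks no other exceptional enlargement is possible once the cyclic part is forced to be $\langle\bar\nu\rangle$). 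Hence $\overline{\Aut(X)}=\langle\bar\nu\rangle\cong C_n$, and since the extension $1\to\langle\iota\rangle\to\Aut(X)\to\overline{\Aut(X)}\to1$ is split by $\nu$ and $\langle\iota\rangle$ is central, $\Aut(X)=\langle\iota,\nu\rangle=G_0$, giving (i).

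Finally, for (ii) I would compute the signature of $\pi_X\colon X\to X/\Aut(X)=X/G_0$. The quotient $X/G_0$ is the $x$-line modulo $x\mapsto\zeta_n x$, again a $\pp^1$ with coordinate $u=x^n$, so $g_0=0$. The ramification comes from three sources: the $2m$ distinct values $u=a_i$ and $u=-1/\bar a_i$ of $f$'s zeros, each giving a point whose stabilizer is $\langle\iota\rangle$ of order $q$ (these $u$-values are $\nu$-free by the moduli hypotheses), so $2m$ points of index $q$; the point $u=0$ (i.e.\ $x=0$), which is $\nu$-fixed with stabilizer containing $\nu$, and is a branch point of $X\to$ ($x$-line) iff $f(0)=-1\neq 0$ — here $f(0)=-1\neq 0$ so $x=0$ is unramified for the $q$-cover, giving index $n$; and the point $u=\infty$, which is $\nu$-fixed, with the $q$-cover ramified there iff $q\nmid\deg f=2mn$, giving index $n$ when $q\mid 2mn$ and index $nq$ otherwise. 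Riemann--Hurwitz then forces exactly this list with no further branch points. Thus the signature is $(0;\underbrace{q,\dots,q}_{2m},n,n)$ if $q\mid 2mn$ and $(0;\underbrace{q,\dots,q}_{2m},n,nq)$ otherwise, as claimed. The one routine check I would spell out is the ramification count at $0$ and $\infty$ for the cyclic $q$-cover $y^q=f(x)$, which is standard (ramified at a point iff $q$ does not divide the order of vanishing of $f$ there).
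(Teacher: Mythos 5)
Your treatment of (ii) is correct and is in fact more self-contained than the paper's, which disposes of it in one line by reading the signature off Table \ref{qgonal} (case $N=0$ or $N=1$ according to whether $q\mid 2mn$, since $f(0)=-1\neq0$); your direct stabilizer count at the roots of $f$, at $x=0$ and at $x=\infty$ is fine (the only slip is that for $q\mid 2mn$ the points over $\infty$ are stabilized by a ``twisted'' copy $\langle\iota^{2m}\nu\rangle$ of $C_n$ rather than by $\nu$ itself, which does not affect the signature). For (i), however, the step you yourself identify as the main obstacle contains genuine gaps. First, normality of $\langle\iota\rangle$ in $\Aut(X)$ --- which you need before $\overline{\Aut(X)}$ can be regarded as a subgroup of $\PGL_2(\cc)$ acting on the $x$-line --- is automatic only for $q=2$; for $q\geq3$ both of your parenthetical justifications are circular (uniqueness of the degree-$q$ genus-zero quotient is not obvious, and ``kernel of the action on the $x$-line'' presupposes the action you are trying to construct). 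Second, an element of $\overline{\Aut(X)}$ need not normalize $\langle\bar\nu\rangle$: one has to run through the finite subgroups of $\PGL_2(\cc)$ containing $C_n$ (cyclic, dihedral, and the exceptional groups $A_4$, $S_4$, $A_5$, which contain $C_n$ for $n\in\{2,3,4,5\}$, not only $n=3$; for $n=2$ the group $\langle\bar\nu\rangle$ could even sit as a reflection inside a larger dihedral group). Your ``one checks no other exceptional enlargement is possible'' is precisely the content that must be proved; the paper outsources it to the proof of \cite[Lemma 6.1]{hu2} for $n\neq3$ and to \cite[Corollary 3.2]{bt} for the residual $A_4$ case when $n=3$.

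Third, and most concretely, your stated reason for excluding the inversion-type maps would fail: there is no ``asymmetry between the radii $|a_i|$ and $|1/a_j|$'', because the set of radii $\{|a_i|^{1/n}\}\cup\{|a_i|^{-1/n}\}$ is by construction symmetric under $r\mapsto 1/r$ --- that is exactly the point of pairing $x^n-a_i$ with $x^n+1/\bar a_i$, which is what makes $X$ isomorphic to its conjugate. A map $x\mapsto c/x$ with $|c|=1$ sends the circle carrying the roots of $x^n=a_i$ to the circle carrying the roots of $x^n=-1/\bar a_i$, so it is perfectly compatible with the radii; what actually kills it is that preserving the root set would force $c^n=-a_i/\bar a_i$ for every $i$, contradicting the hypothesis $a_i/\bar a_i\neq a_j/\bar a_j$ (and here $m\geq2$ is essential). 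So the hypotheses are used in a finer way than your sketch indicates, and a self-contained proof of (i) requires carrying out the full case analysis rather than gesturing at it.
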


\begin{proof}
Observe that $ii)$ is obvious by Table \ref{qgonal}.
 If $n\neq3$, then $i)$ follows from \cite[Lemma 6.1]{hu2} and its proof (which does not depend on the fact that $m$ is odd). For $n=3$ we need to exclude the missing case $\langle\overline{\nu}\rangle<\overline\Aut(X)\cong A_4$, where $\overline{\nu}$ is the image of $\nu$ in $\overline\Aut(X)$. Suppose we are in this case, then by \cite[Corollary 3.2]{bt} $\tau$ would be an automorphism of $f(x)$, giving a contradiction.
\end{proof}

 \begin{table}
$$\begin{tabular}{|c|c|c|c|c|}
\hline
$q$ & signature of $\pi_X$  & $g$ & $\Aut(X)$\\
\hline
$3$ & $(0; 2, 3, 8)$ & $2$ & ${\rm GL}(2, 3)$\\
\hline
$3$ & $(0; 2, 3, 12)$  & $3$ & ${\rm SL}(2, 3)/{\rm CD}$\\
\hline
$5$ & $(0; 2, 4, 5)$  & $4$ & $S_5$\\
\hline
$7$ & $(0; 2, 3, 7)$  & $3$ & ${\rm PSL}(2, 7)$\\
\hline
$q\geq5$ & $(0; 2, 3, 2q)$  & $\frac{(q-1)(q-2)}{2}$ & $(C_q\times C_q)\rtimes S_3$\\
\hline
$q\geq3$ & $(0; 2, 2, 2, q)$  & $(q-1)^2$ & $(C_q\times C_q)\rtimes V_4$\\
\hline
$q\geq3$ & $(0; 2, 4, 2q)$  & $(q-1)^2$ & $(C_q\times C_q)\rtimes D_4$\\
\hline
\end{tabular}$$
\caption{Non-normal $q$-gonal curves.}\label{nonnormal}
\end{table}

The following generalizes  \cite[Proposition 5.0.5]{hu1} and \cite[Proposition 6.2]{hu2}. Observe that if $q$ does not divide $mn$, then $X$ is an odd signature curve by the previous Lemma, thus it can be defined over its field of moduli 
relative to the extension $\cc/\rr$.

\begin{proposition}
Let $X$ be a  cyclic $q$-gonal curve over $\mathbb{C}$ given by $y^q=f(x)$, where $q>2$, $f$ is as in (\ref{Ec2}) and satisfies the properties mentioned above, $m,n>1$ and $q|mn$. The field of moduli of $X$ relative to the extension $\mathbb{C}/\mathbb{R}$ is $\mathbb{R}$ and is a field of definition of $X$ if and only if $n$ is odd. 
\end{proposition}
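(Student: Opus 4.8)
The plan is to analyze the field of moduli $M_{\cc/\rr}(X)$ via the Weil cocycle condition (Theorem \ref{Wa}), exploiting the very explicit description of $\Aut(X)$ from Lemma \ref{Ha1}(i) and the shape of $f$ in \eqref{Ec2}. First I would show that $M_{\cc/\rr}(X)=\rr$, i.e. that $X$ is isomorphic to its complex conjugate $\bar X$ (the curve $y^q=\bar f(x)$). The key point is that the properties imposed on the $a_i$ — in particular the pairing of roots $a_i$ with $-1/\bar a_i$ — are arranged precisely so that complex conjugation of the $x$-coordinates, followed by $x\mapsto 1/x$ (or rather a Möbius map permuting the branch data) and a suitable adjustment of $y$, carries $f$ to a scalar multiple of $\bar f$. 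Concretely, since $\overline{f(x)}$ has roots $\bar a_i$ and $-1/a_i$, the involution $x\mapsto -1/x$ sends the zero set of $\bar f$ back to that of $f$ up to the leading constant, so composing conjugation with $x\mapsto -1/x$ and rescaling $y$ gives an isomorphism $\phi: X\to \bar X$ over $\cc$. Hence $U_{\cc/\rr}(X)=\Aut(\cc/\rr)$ and $M_{\cc/\rr}(X)=\rr$.

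Next, to decide definability over $\rr$, I would apply the D\`ebes–Emsalem machinery (Theorem \ref{de}): $X$ is definable over $\rr$ iff the canonical real model $B$ of $X/\Aut(X)\cong\pp^1$ has an $\rr$-rational point outside the branch locus. Since $B$ has genus $0$, it is either $\pp^1_\rr$ (if it has an $\rr$-point, equivalently if it has an $\rr$-rational divisor of odd degree) or a conic without real points. By Lemma \ref{branch} the branch divisor $D(\phi)$ descends to an $\rr$-rational divisor on $B$; its support consists of the $2m$ points of ramification index $q$ together with the one or two points of index $n$ (resp. $n$ and $qn$). When $q|mn$, by Lemma \ref{Ha1}(ii) these exceptional indices are $n$ and $qn$ — they are distinct — so the single point of index $qn$ is an $\rr$-rational point of $B$, forcing $B\cong\pp^1_\rr$ and hence definability over $\rr$, regardless of the parity of $n$. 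This cannot be the whole story, so the real subtlety must be that $q|mn$ does \emph{not} force $q|2mn$ to fail; rather, I would look more carefully: the relevant obstruction is the \textbf{cocycle obstruction at the level of $X\to B$}, not the existence of a rational point on $B$. So the correct approach is to build an explicit family of isomorphisms $f_\sigma:X\to X^\sigma$ (only $\sigma=$ complex conjugation matters) from the map $\phi$ above, compute $f_{\sigma}^\sigma\circ f_\sigma\in\Aut(X)$, and test whether this element is a coboundary, i.e. whether one can modify $\phi$ by an automorphism to make $f_\sigma^\sigma\circ f_\sigma=\mathrm{id}$.

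The heart of the argument is therefore a $2$-cocycle computation in $H^2(\mathrm{Gal}(\cc/\rr),\Aut(X))$ with $\Aut(X)=\langle\iota,\nu\rangle$, $\iota^q=\nu^n=1$. Writing $\phi$ in coordinates $(x,y)\mapsto(-1/x,\, c\,y/x^{?})$ for the appropriate exponent and constant $c$, the composite $\phi^\sigma\circ\phi$ is an explicit automorphism of $X$, say $\iota^{a}\nu^{b}$; by construction of $f$ it will land in $\langle\iota\rangle$ (the $\nu$-part dies because $x\mapsto -1/x$ is an involution), so the obstruction reduces to a class in $H^2(\zz/2,\langle\iota\rangle)=\langle\iota\rangle/N$ where $N=\{z\bar z\}$. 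Replacing $\phi$ by $\iota^k\nu^\ell\cdot\phi$ changes the cocycle by $\nu^\ell$-conjugation and by $N$; tracking the constant $c$ and the effect of $\nu$ on it, one finds the obstruction is trivial precisely when a certain $q$-th root of unity (built from $c$ and from $\zeta_n$ via the relation forced by $q|mn$) is a norm from $\cc$, which happens iff $n$ is odd. The main obstacle I anticipate is bookkeeping the $y$-coordinate: getting the exponent of $x$ in $\phi$ and the scalar $c$ exactly right so that $\phi^\sigma\circ\phi$ is computed correctly, and then correctly identifying which automorphisms are available to kill it — this is where the hypothesis $q|mn$ (as opposed to the odd-signature case $q\nmid mn$) enters, since in the odd-signature case Theorem \ref{sa} already gives definability unconditionally and no parity condition appears.
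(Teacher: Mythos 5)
Your overall strategy for the second half (realize that only complex conjugation matters, enumerate all isomorphisms $X\to\bar X$ as $\Aut(X)$-translates of one fixed $\phi$, and check whether any of them satisfies Weil's condition $\phi^\sigma\circ\phi=\mathrm{id}$) is the same as the paper's, but the execution contains errors that would derail the computation at the decisive step. First, the map $x\mapsto -1/x$ does \emph{not} carry the zero set of $\bar f$ to that of $f$ for general $n$: if $\beta^n=a_i$ then $(-1/\beta)^n=(-1)^n/a_i$, which is a root of $x^n+1/a_i$ only when $n$ is odd. The correct isomorphism is $\mu(x,y)=\bigl(1/(\zeta_{2n}x),\ \zeta_{2q}y/x^{2mn/q}\bigr)$, and this is not mere bookkeeping --- the twist by $\zeta_{2n}$ is exactly where the parity of $n$ enters. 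Consequently your key structural claim, that the $\nu$-part of $\phi^\sigma\circ\phi$ dies and the obstruction lives in $H^2(\zz/2,\langle\iota\rangle)$, is false: one computes $\overline{(\mu\nu^k)}\,\mu\nu^k=(\tau')^{2k+1}\nu^{2k+1}$ with $\tau'(x,y)=(x,\zeta_n^{mn/q}y)\in\langle\iota\rangle$, so the composite always has the nontrivial $\nu$-component $\nu^{2k+1}$. The entire obstruction for $n$ even is that $\nu^{2k+1}\neq\mathrm{id}$ for every $k$; your proposed reduction to ``a $q$-th root of unity being a norm from $\cc$'' looks in the wrong direction (the $\iota$-direction) and cannot produce the parity-of-$n$ dichotomy. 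Conversely, for $n$ odd the choice $k=(n-1)/2$ kills both factors and gives definability over $\rr$.

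Two smaller points. In the D\`ebes--Emsalem digression you misread Lemma \ref{Ha1}(ii): since $q\mid mn$ implies $q\mid 2mn$, the signature is $(0;q,\dots,q,n,n)$, with \emph{two} points of index $n$ and no point of index $qn$; this is precisely why the curve fails to have odd signature and why Theorem \ref{sa} gives no information here. (Your instinct that ``this cannot be the whole story'' was right, but the reason is that you had the two signature cases swapped.) Finally, when checking that all isomorphisms fail the cocycle condition, one should also note that $\iota$ commutes with $\mu$ and $\nu$, so twisting by $\iota^j$ does not change $\overline{(\mu\iota^j\nu^k)}\,\mu\iota^j\nu^k$; the paper records this explicitly, and it is needed to reduce the check to the family $\mu\nu^k$ alone.
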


\begin{proof}
Observe that $X$ is isomorphic to the conjugate curve  $$\bar X:\ y^q=\prod_{1\leq i \leq m}(x^n-\bar a_i)(x^n+1/a_i)$$  
by the isomorphism $$\mu(x, y)=\left(\frac{1}{\zeta_{2n} x}, \frac{\zeta_{2q} y}{x^{2mn/q}}\right).$$ 
By Lemma \ref{Ha1} the automorphism group of $X$ is generated by $\iota$ and $\nu$, thus any isomorphism between $X$ and $\bar X$ is of the form  $\mu \iota^j\nu^k$, where $0 \leq j\leq q-1$ and $0\leq k\leq n-1$.  
An easy computation shows that
 \begin{eqnarray*}
\overline{(\mu\nu^k)}\mu\nu^k & = 
(\tau')^{2k+1}\nu^{2k+1},
\end{eqnarray*}
where $\tau'(x, y)=(x, \zeta_n^{mn/q}y)$ .
 Moreover, since $\iota$ commutes with $\mu$ and $\nu$: 
 $$\overline{(\mu\iota^j\nu^k)}\mu\iota^j\nu^k=\bar\mu \iota^{-j}\overline{\nu^{k}}\mu\iota^j\nu^k=\bar\mu \overline{\nu^{k}}\mu\nu^{k}=\overline{(\mu\nu^k)} \mu\nu^k.$$

In case $n$ is even the cocycle condition in Theorem \ref{Wa} does not hold since  $\nu^{2k+1}\not=id$ for any $k$, thus $X$ cannot be defined over $\mathbb{R}$.  
Otherwise, if $n$ is odd, we have $\overline{(\mu\nu^k)}\mu\nu^k=id$ with $k=(n-1)/2$,
so that $X$ can be defined over $\rr$.
\end{proof}

\begin{corollary}\label{h2}
Let $X$ be a non-normal $q$-gonal curve defined over a field $K$ of characteristic zero. Then $X$ is definable over $K_X$.
\end{corollary}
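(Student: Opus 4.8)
\textbf{Proof proposal for Corollary \ref{h2}.}

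The plan is to reduce the statement to Theorem \ref{sa} by showing that a non-normal $q$-gonal curve always has odd signature. First I would note that by definition $X$ carries a cyclic subgroup $C_q\subset\Aut(X)$ of prime order $q$ with $X/C_q$ of genus zero, and non-normality means $C_q$ is not normal in $G:=\Aut(X)$. The key structural input is the classification of such curves: for $p=0$ (and this is the case we are in, $\operatorname{char} K=0$), the list of possible groups $G$ together with the signature of $\pi_X:X\to X/G$ is finite and is recorded in Table \ref{nonnormal}. So the proof is essentially a table check: in every row of Table \ref{nonnormal} the signature of $\pi_X$ is of the form $(0;c_1,\dots,c_r)$, and one inspects that in each case some $c_i$ occurs an odd number of times — indeed in the sporadic rows $(0;2,3,8)$, $(0;2,3,12)$, $(0;2,4,5)$, $(0;2,3,7)$, $(0;2,3,2q)$ all three entries are distinct (generically), so each appears exactly once, and in the rows $(0;2,2,2,q)$ and $(0;2,4,2q)$ the entry $q$ (resp. $4$ and $2q$, assuming $q\neq 2$) appears exactly once. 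Hence $X$ is an odd signature curve in every case.

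Once odd signature is established, Theorem \ref{sa} applies directly over any field $K$, giving that $K_X$ is a field of definition of $X$, which is exactly the claim. So concretely the steps are: (1) recall the hypothesis and reduce to working over $\bar K$, observing that the field of moduli and the odd-signature property are insensitive to the base change since $\pi_X$ is defined over $\bar K$; (2) invoke the classification of non-normal cyclic $q$-gonal curves in characteristic zero to see that $(G,\text{signature of }\pi_X)$ appears in Table \ref{nonnormal}; (3) check row by row that the signature is odd; (4) conclude by Theorem \ref{sa}.

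I would also make one careful remark on the first family, $(0;2,3,2q)$ with $g=\frac{(q-1)(q-2)}{2}$ and $G=(C_q\times C_q)\rtimes S_3$: here one needs $q\geq 5$ so that $2q\notin\{2,3\}$, and $2q$ is even while the other entries are not, so the entry $2q$ (or the entry $3$) occurs exactly once; for the row $(0;2,2,2,q)$ the entry $q$ occurs once precisely because $q>2$; and for $(0;2,4,2q)$ the entries $2,4,2q$ are pairwise distinct when $q>2$, so again each appears once. None of these degenerate.

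The main obstacle is not a computation but a bookkeeping/citation point: one must be sure that Table \ref{nonnormal} is genuinely exhaustive in characteristic zero — i.e. that every non-normal cyclic $q$-gonal curve of genus $\geq 2$ over an algebraically closed field of characteristic $0$ really does have $(\Aut(X),\text{signature})$ in that list, with no omitted case in which some entry could repeat an even number of times. This is where I would lean on the structure theory of $q$-gonal curves (the analysis of how $C_q$ sits inside $\Aut(X)$ when it fails to be normal forces $\Aut(X)$ to be one of a short list of groups, and the Riemann--Hurwitz formula then pins down the signature). Modulo that classification — which the paper takes as known — the corollary is immediate from Theorem \ref{sa}.
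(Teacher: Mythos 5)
Your proposal is correct and follows the same route as the paper: the paper's proof simply cites Wootton's classification (\cite[Theorem 8.1]{Wo}) for the signatures in Table \ref{nonnormal}, observes that each is an odd signature, and applies Theorem \ref{sa}. Your additional row-by-row verification and the caveats about $q>2$ are accurate but are exactly the "in any case $X$ has odd signature" step the paper leaves implicit.
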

\begin{proof}
By \cite[Theorem 8.1]{Wo} the signature of $\pi_X$ is given in Table \ref{nonnormal}.
In any case $X$ has odd signature, thus the result follows from Theorem \ref{sa}.
\end{proof}

\section{Plane quartics}
In this section $X$ will be a smooth plane quartic defined over an algebraically closed field of characteristic zero. Table \ref{quartics} lists all possible automorphism groups of smooth plane quartics. Moreover, for each group, it gives the equation of a plane quartic having this group as automorphism group (n.a. means ``not above'', i.e. not isomorphic to other models above it in the table) and the signature of  the covering $\pi_X$ (see \cite[Theorem 16 and \S 2.3]{ba}). 
 \begin{table}[h]
$$\begin{tabular}{|c|c|c|}
\hline
$G$ & equation & signature\\
\hline
${\rm PSL}_2(7)$ &  $z^3y+y^3x+x^3z$ & $(0; 2, 3, 7)$\\
\hline
$S_3$ & $z^4+a z^2yx+z(y^3+x^3)+b y^2x^2$ & $(0; 2, 2, 2, 2, 3)$\\
      & $a\neq b, \ ab\neq0$ & \\
\hline
$C_2\times C_2$ & $x^4+y^4+z^4+a x^2y^2+b x^2z^2+c y^2z^2$ & $(0; 2, 2, 2, 2, 2, 2)$\\
 & $a\neq b,\ a\neq c,\ b\neq c$& \\
\hline
$D_4$ & $x^4+y^4+z^4+a z^2(y^2+x^2)+b y^2x^2$ & $(0; 2, 2, 2, 2, 2)$\\
  & $a\neq b,\ a\neq0 $ & \\
\hline
$S_4$ & $x^4+y^4+z^4+a(z^2y^2+z^2x^2+y^2x^2)$ & $(0; 2, 2, 2, 3)$\\
 & $a\neq0,\ \frac{-1\pm\sqrt{-7}}{2}$ & \\
\hline
$C_4^2\rtimes S_3$ & $z^4+y^4+x^4$ & $(0; 2, 3, 8)$\\
\hline
$C_4\circledcirc (C_2)^2$ & $z^4+y^4+x^4+a z^2y^2$ & $(0; 2, 2, 2, 4)$\\
 & $a\neq0, \pm2, \pm6, \pm(2\sqrt{-3})$ & \\
\hline
$C_4\circledcirc A_4$ & $x^4+y^4+xz^3$ & $(0; 2, 3, 12)$\\

\hline
$C_6$ & $z^4+a z^2y^2+y^4+yx^3$ & $(0; 2, 3, 3, 6)$\\
 & $a\neq 0$ & \\
\hline
$C_9$ & $z^4+zy^3+yx^3$ & $(0; 3, 9, 9)$\\
\hline

$C_3$ & $z^3L_1(y, x)+L_4(y, x)\ (n.a)$ & $(0; 3, 3, 3, 3, 3)$\\
\hline
$C_2$ & $z^4+z^2L_2(y, x)+L_4(y, x)\ (n.a.)$ & $(1; 2, 2, 2, 2)$\\
\hline
\end{tabular}
 $$
 \vspace{0.3cm}
 \caption{Automorphisms of plane quartics.}\label{quartics}
 \end{table}
 
  \noindent Table \ref{quartics} and Theorem \ref{sa2} imply the following result.
\begin{corollary}\label{fmqua}
Let $X$ be a smooth plane quartic defined over an algebraically closed field $K$ of characteristic zero. If either $\Aut(X)$ is trivial or $|\Aut(X)|>4$, then $X$ is definable over $K_X$.
\end{corollary}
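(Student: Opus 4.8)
The plan is to read off the signatures listed in Table \ref{quartics} and check, case by case, which of them are odd signatures in the sense of the definition; then Theorem \ref{sa2} (or equivalently Theorem \ref{sa}) closes each such case. First I would dispose of the two trivial extremes: if $\Aut(X)$ is trivial, then $X$ is well known to be definable over $K_X$ (this is standard and is recalled in the Introduction), so there is nothing to prove. If $|\Aut(X)|>4$, I would argue that $X$ cannot be the generic (n.a.) $C_2$ or $C_3$ curve, nor the $C_2\times C_2$ or $D_4$ curves, simply by cardinality, so the signature of $\pi_X$ is one of the remaining nine entries of the table.

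Next I would go through those nine signatures and exhibit, in each, a ramification index $c_i$ occurring an odd number of times. For $\mathrm{PSL}_2(7)$, $S_4$, $C_4^2\rtimes S_3$ and $C_4\circledcirc A_4$ the signature has three entries $(0;a,b,c)$ with $a,b,c$ pairwise distinct, so every $c_i$ occurs exactly once; in particular $X$ has odd signature. The same holds for $C_9$ with signature $(0;3,9,9)$, where $3$ appears once. For $C_4\circledcirc(C_2)^2$, signature $(0;2,2,2,4)$, the value $4$ appears once. For $C_6$, signature $(0;2,3,3,6)$, the values $2$ and $6$ each appear once. For $S_3$, signature $(0;2,2,2,2,3)$, the value $3$ appears once. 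This exhausts all entries with $|\Aut(X)|>4$, so in every such case $X$ is an odd signature curve, and Theorem \ref{sa} gives that $K_X$ is a field of definition for $X$.

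I would then note that Theorem \ref{sa} as stated applies to a curve over a field $K$ and yields definability over $K_X$; since here $X$ is given over an algebraically closed $K$ of characteristic zero, $K$ is perfect and Theorem \ref{sa} applies verbatim, identifying $K_X$ with $M_{K/\qq}(X)$ via Koizumi's theorem as recalled in Section 1. There is essentially no obstacle here: the whole content is already packaged in Theorem \ref{sa2} and in the signature computations of \cite{ba} reproduced in Table \ref{quartics}. The only point requiring the mildest care is the bookkeeping that the four excluded cases ($C_2$ n.a., $C_3$ n.a., $C_2\times C_2$, $D_4$) are precisely the ones whose automorphism group has order $\le 4$, so that the hypothesis $|\Aut(X)|>4$ genuinely forces an odd signature; this is immediate from inspection of the ``$G$'' column of the table.
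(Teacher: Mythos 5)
Your overall strategy is exactly the paper's: the paper's entire proof is ``Table \ref{quartics} and Theorem \ref{sa2} imply the result,'' i.e.\ read off the signatures and observe that each one occurring under the hypothesis is an odd signature. However, your case bookkeeping contains a genuine error. You exclude the $D_4$ entry ``by cardinality,'' together with $C_2$, $C_3$ and $C_2\times C_2$, and you assert at the end that these four are precisely the groups of order $\le 4$. That is false: in Table \ref{quartics} the group $D_4$ is the dihedral group of order $8$ (as one checks from Riemann--Hurwitz applied to the signature $(0;2,2,2,2,2)$ for a genus $3$ curve: $4=|G|\bigl(-2+5\cdot\tfrac12\bigr)$ forces $|G|=8$), so it falls squarely under the hypothesis $|\Aut(X)|>4$ and must be treated. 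As written, your case analysis covers only eight of the nine relevant rows and the missing row is never verified. The gap is easily repaired: $(0;2,2,2,2,2)$ has the index $2$ occurring five times, an odd number, so the $D_4$ curves are odd signature curves and Theorem \ref{sa} applies to them as well. The correct statement is that the excluded rows are exactly $C_2$, $C_3$ and $C_2\times C_2$ (orders $2$, $3$, $4$), matching the remark after the corollary that the hypothesis amounts to $\Aut(X)\not\cong C_2, C_2\times C_2$ together with the trivial-group case.

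A second, more minor slip: you group $S_4$ with the rows whose signature is a triple $(0;a,b,c)$ of pairwise distinct indices, but its signature is $(0;2,2,2,3)$. Your conclusion there is nonetheless correct, since $3$ appears exactly once. With these two corrections your argument coincides with the paper's proof.
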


Observe that the hypothesis in the Corollary is equivalent to ask that $\Aut(X)$ is not isomorphic to either $C_2$ or $C_2\times C_2$.  
We will now construct a plane quartic $X$ with $\Aut(X)\cong C_2$ and of field of moduli $\mathbb{R}$ but not definable over $\mathbb{R}$.
Consider the family $X_{a_1,a_2,a_3}$ of plane quartics defined  by
$$y^4+y^2(x-a_1z)(x+\frac{1}{a_1}z)+(x-a_2z)(x+\frac{1}{\bar a_2}z)(x-a_3z)(x+\frac{1}{\bar a_3}z)=0,$$
where $a_1\in \mathbb{R}$ and $a_2a_3\in \rr$.
The following Lemma implies that  the generic curve in the family is smooth and has automorphism group of order two.

\begin{lemma}\label{Cu1} The plane quartic $X_{a_1,a_2,a_3}$ with $a_1=1, a_2=1-i$ and $a_3=2(i-1)$ is smooth and its automorphism group is generated by  $\nu(x: y: z)=(x: -y: z).$
\end{lemma}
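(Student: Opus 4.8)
The plan is to verify smoothness by a direct Jacobian criterion computation for the specific quartic, and to determine the automorphism group by combining a structural argument about quartics of this shape with a finite check. First I would substitute $a_1 = 1$, $a_2 = 1-i$, $a_3 = 2(i-1)$ into the defining equation, obtaining an explicit quartic $F(x:y:z) = y^4 + y^2(x-z)(x+z) + (x - (1-i)z)(x + \frac{1}{1+i}z)(x - 2(i-1)z)(x + \frac{1}{2(i+1)}z)$. Note $\overline{a_2} = 1+i$ so $1/\overline{a_2} = 1/(1+i)$, and $\overline{a_3} = -2(i+1)$ so $1/\overline{a_3} = -1/(2(i+1))$; I would clear denominators to get a genuine quartic with algebraic-number coefficients. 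Smoothness then amounts to checking that $F = F_x = F_y = F_z = 0$ has no common projective solution. Since $F$ is even in $y$, the partial $F_y = 4y^3 + 2y\,(x^2-z^2)$ vanishes either when $y = 0$ or when $2y^2 = -(x^2-z^2)$; in the first case one reduces to checking that the quartic $g(x,z) := (x - (1-i)z)(x + \frac{1}{1+i}z)(x - 2(i-1)z)(x + \frac{1}{2(i+1)}z)$ has no repeated root (i.e. the four displayed linear factors are pairwise non-proportional, which is immediate from the chosen numerical values), and that its roots are not also roots of $F_x$; in the second case one substitutes back and gets a one-variable condition to rule out. This is a routine but slightly tedious elimination; I would present it as a determinant/resultant being nonzero.

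For the automorphism group, the key point is that $X_{a_1,a_2,a_3}$ is visibly invariant under $\nu(x:y:z) = (x:-y:z)$, so $\langle\nu\rangle \cong C_2 \subseteq \Aut(X)$; the work is to show there is nothing more. I would invoke the classification in Table \ref{quartics}: every group appearing there other than $C_2$ itself has even order at least $4$ and, crucially, forces the quartic to be isomorphic to one of the listed normal forms with the corresponding signature. So it suffices to show $X$ is \emph{not} isomorphic to any quartic with a larger automorphism group. The cleanest route: the quotient $X/\langle\nu\rangle$ together with the branch data of $X \to X/\langle\nu\rangle$ is determined by the equation — writing $w = y^2$ exhibits $X \to \pp^1$ (the $y \mapsto -y$ quotient) and the branch divisor is cut out by the discriminant in $y^2$ of $F$, namely the locus where $(x^2-z^2)^2 - 4 g(x,z) = 0$ together with the points over $g = 0$; one checks this configuration of branch points on $\pp^1$ has trivial stabilizer in $\PGL_2$ beyond what $\nu$ already accounts for, so no automorphism of $X$ can induce a nontrivial automorphism of the base compatible with more symmetry. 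Equivalently, and perhaps more simply for exposition, one observes that if $\Aut(X)$ were strictly larger it would contain an element of order $2$, $3$, or $4$ commuting appropriately, and one rules these out using the genericity-type conditions $|a_i| \neq |a_j|$, etc., exactly as in the proof of Lemma \ref{Ha1} for the $q$-gonal case — the asymmetry of the chosen roots $1-i$ and $2(i-1)$ (different absolute values, different arguments, and none equal to the reciprocal of another) obstructs any extra linear automorphism from permuting the factors of $g$.

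The main obstacle I anticipate is the automorphism-group half: showing $\Aut(X) = C_2$ rather than merely containing it. The smoothness check is mechanical. For the automorphisms, the delicate part is that a priori an extra automorphism of $X$ need not be linear in the given coordinates — but since $X$ is a smooth plane quartic, $\Aut(X)$ acts linearly on the canonical model, hence by elements of $\PGL_3$ preserving $F$, so one is reduced to a finite linear-algebra problem: find all $A \in \PGL_3$ with $F \circ A = \lambda F$. Because $F$ has the special form "$y^4 + y^2 Q_2(x,z) + Q_4(x,z)$", any such $A$ must respect the pencil structure and in particular preserve the line $\{y = 0\}$ or be compatible with the $y$-grading; a short case analysis (as in \cite{ba}, \cite{hu2}) shows $A$ acts on $(x,z)$ by an element of $\PGL_2$ fixing the root-configuration of $g(x,z)$ and on $y$ by a scalar, and the genericity conditions kill everything but $\pm\mathrm{id}$ on $y$, i.e. $\Aut(X) = \langle\nu\rangle$. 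I would write this up by first reducing to the linear action, then to the $\PGL_2$-action on the branch points, then citing the numerical asymmetry of $\{a_1, a_2, a_3\}$ to conclude.
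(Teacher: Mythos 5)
Your smoothness check and the reduction of $\Aut(X)$ to elements of $\PGL_3(\cc)$ preserving $F$ are fine and agree with the paper. But the core of the lemma --- showing nothing beyond $\nu$ survives --- has two genuine gaps. First, the ``cleanest route'' is based on a miscomputation of the quotient: $\nu$ fixes exactly the four points of $X$ on the line $y=0$ (the point $(0:1:0)$ is not on $X$), so by Riemann--Hurwitz $X/\langle\nu\rangle$ has genus $1$, not $0$; the substitution $w=y^2$ presents $X/\langle\nu\rangle$ as the genus-one double cover $w^2+wQ_2+Q_4=0$ of $\pp^1_{(x:z)}$, and there is no branch divisor on $\pp^1$ whose $\PGL_2$-stabilizer you can analyze. (The paper does use this elliptic quotient $E$, but differently: to exclude a $C_6$ it observes that an order-$3$ automorphism would descend to an order-$3$ automorphism of $E$ with fixed points, impossible since $j(E)\neq 0$.)

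Second, the structural claim that any $A\in\PGL_3$ with $F\circ A=\lambda F$ must ``respect the $y$-grading,'' i.e.\ preserve $\{y=0\}$ and act by $\PGL_2$ on $(x,z)$ and a scalar on $y$, is false in general (the Fermat quartic has the form $y^4+y^2Q_2+Q_4$ and its coordinate permutations do not preserve $y=0$), and it is precisely the hard case that it misses. By the classification of automorphism groups of plane quartics, if $\Aut(X)\supsetneq\langle\nu\rangle$ then $\Aut(X)$ contains $C_2\times C_2$, $C_6$ or $S_3$; in the $S_3$ case the order-three element does not normalize $\langle\nu\rangle$ and does not preserve $\{y=0\}$, so no argument about the root configuration of $g(x,z)$ on the line $y=0$ can reach it. This is where the paper has to work: it moves $X$ to the Bars normal form $(u^3+v^3)w+u^2v^2+auvw^2+bw^4$, constrains the conjugating matrix $A$ by matching the $\alpha$-invariant bitangent $w=0$ against the four $\nu$-invariant bitangents of $X$, and finishes with an explicit Magma computation. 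Also note that the genericity conditions $|a_i|\neq|a_j|$, $a_i/\bar a_i\neq a_j/\bar a_j$, etc.\ that you invoke are hypotheses on the $q$-gonal family of Lemma \ref{Ha1}, not on the quartic family $X_{a_1,a_2,a_3}$ (whose only stated constraints are $a_1\in\rr$ and $a_2a_3\in\rr$), so they cannot be cited here; the exclusion must be done for the specific values $a_1=1$, $a_2=1-i$, $a_3=2(i-1)$.
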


\begin{proof} We recall that any automorphism of a smooth plane quartic is induced by an element of ${\rm PGL}(3,\cc)$.
If $\Aut(X)$ properly contains the cyclic group generated by $\nu$, then it contains a subgroup isomorphic to either $C_2\times C_2, C_6$ or $S_3$  by \cite[pag.\,26]{ba}.
We will now exclude each of these cases.

The first case can be excluded because an explicit computation shows that there is no involution, except $\nu$, which preserves the four fixed points of $\nu$.

 Now suppose that $\Aut(X)$ contains a cyclic subgroup of order $6$ generated by $\alpha$ with $\nu=\alpha^3$.  The automorphism $\tau:=\alpha^2$ induces an order three automorphism $\overline{\tau}$ on the elliptic curve $E:=X/\langle \nu\rangle$ having fixed points. This is a contradiction  since the curve $E$ (whose equation can be obtained replacing $y^2$ with $y$ in the equation of $X$)  has $j$-invariant distinct from zero.
  
Finally, suppose that $\Aut(X)$ contains a subgroup $\langle \nu,\gamma\rangle$ isomorphic to $S_3$.
 Here we will apply a method suggested by F. Bars \cite{ba}.
 By \cite[Theorem 29]{ba}, up to a change of coordinates the equation of $X$ takes the following form:
  \[
 (u^3 + v^3) w + u^2v^2 + auvw^2 + bw^4=0.
 \]
 and the generators of $S_3$ with respect to the coordinates $(u,v,w)$ are
 \[ \alpha:=\left( \begin{array}{ccc}
0 & 1 & 0 \\
1 & 0 & 0 \\
0 & 0 & 1 \end{array} \right), \qquad \beta:=\left( \begin{array}{ccc}
\zeta_3 & 0 & 0 \\
0 & \zeta_3^2 & 0 \\
0 & 0 & 1 \end{array} \right).\]
Thus there exists $A\in \PGL(3,\cc)$ such that $A\alpha A^{-1}=\nu, A\beta A^{-1}=\gamma.$
 The first condition implies that $A$ is an invertible matrix of the following form
 \[ A=\left( \begin{array}{ccc}
a & a & c \\
d & -d & 0 \\
g & g & l \end{array} \right).
\]
Note that $X$ has exactly four bitangents $x=s_jz,\ j=1,2,3,4$ invariant under the action of the involution $\nu$,
where $s_j$  are the zeros of $$\triangle=(x^2-1)^2-4(x-(1+i))(x+\frac{1}{1-i})(x-2(-1+i))(x-\frac{1}{2(1+i)}).$$
Let $b_{j1}=(s_j,q_j,1), b_{j2}=(s_j,-q_j,1)$ be the two tangency points of the line $x=s_jz$.
On the other hand, observe that the line $w=0$ is invariant for $\alpha$ and it is bitangent to $X$ at $p_1=(1:0:0), p_2=(0:1:0)$.
Thus for some $j$ we have $\{Ap_1,Ap_2\}=\{b_{j1}, b_{j2}\}$,
 from which we get $a=s_jg$, $d=\pm q_jg$.
By means of these remarks and using the Magma  \cite{Magma} code available at this webpage
\vspace{0.2cm}

\url{https://sites.google.com/site/squispeme/home/fieldsofmoduli}
\vspace{0.2cm}

\noindent we proved that  $\gamma=A\beta A^{-1}$ is not an automorphism of $X$.
\end{proof}

\begin{proposition}
 Let $X_{a_1,a_2,a_3}$ as defined previously with $\Aut(X_{a_1,a_2,a_3})\cong C_2$. Then the field of moduli of $X_{a_1,a_2,a_3}$ relative to the extension $\mathbb{C}/\mathbb{R}$ is $\mathbb{R}$ and is not a field of definition for $X$.
\end{proposition}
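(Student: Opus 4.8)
The plan is to show two things: first, that $\mathbb{R}=M_{\mathbb{C}/\mathbb{R}}(X)$ by exhibiting an explicit isomorphism $\mu\colon X\to\bar X$; and second, that no such family of isomorphisms can satisfy Weil's cocycle condition in Theorem \ref{Wa}, so that by Theorem \ref{sqa} the curve $X$ is not definable over $\mathbb{R}$. Since $\mathrm{Gal}(\mathbb{C}/\mathbb{R})=\{\mathrm{id},\kappa\}$ with $\kappa$ complex conjugation, the cocycle condition reduces to the single identity $\overline{f}\circ f=\mathrm{id}$ for the chosen isomorphism $f=f_{\kappa}\colon X\to X^{\kappa}=\bar X$, where $\overline{f}=f^{\kappa}\colon \bar X\to X$ is obtained by conjugating the coefficients of $f$.

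First I would write down the conjugate curve $\bar X_{a_1,a_2,a_3}$: since $a_1\in\mathbb{R}$ and $a_2a_3\in\mathbb{R}$, conjugating the defining equation sends it (after noting $\overline{1/\bar a_2}=1/a_2$, etc.) to the quartic with the factors $(x-\bar a_2 z)(x+\tfrac{1}{a_2}z)(x-\bar a_3 z)(x+\tfrac{1}{a_3}z)$ in the degree-four part and $(x-a_1 z)(x+\tfrac1{a_1}z)$ unchanged in the $y^2$-coefficient. Then I would guess an isomorphism $\mu(x:y:z)=(z:\lambda y:\mu x)$ (a "swap $x\leftrightarrow z$ up to scalars" map, analogous to the $\mu$ used in the cyclic $q$-gonal Proposition above), choosing the scalars so that $\mu$ carries $X$ to $\bar X$; this uses the palindromic structure built into the roots $a_i$ and $-1/\bar a_i$. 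Existence of such a $\mu$ gives $\kappa\in U_{\mathbb{C}/\mathbb{R}}(X)$, hence $M_{\mathbb{C}/\mathbb{R}}(X)=\mathbb{R}$ by definition.

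Next, since $\Aut(X)=\langle\nu\rangle\cong C_2$ by Lemma \ref{Cu1}, every isomorphism $X\to\bar X$ is of the form $\mu$ or $\mu\nu$. So the cocycle condition holds iff one of $\overline{\mu}\circ\mu$, $\overline{\mu\nu}\circ(\mu\nu)$ equals the identity in $\Aut(X)=\{\mathrm{id},\nu\}$. Using that $\nu$ commutes with $\mu$ (it acts as $y\mapsto -y$, while $\mu$ only permutes $x,z$ and rescales) exactly as in the cyclic case, one gets $\overline{\mu\nu}\circ(\mu\nu)=\overline{\mu}\circ\mu$, so it suffices to compute the single element $c:=\overline{\mu}\circ\mu\in\{\mathrm{id},\nu\}$. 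I expect that a direct computation — tracking the scalars $\lambda,\mu$ and their conjugates through $\overline{\mu}\circ\mu$, which collapses to an automorphism $(x:y:z)\mapsto(x:\epsilon y:z)$ with $\epsilon=\pm1$ — yields $c=\nu\neq\mathrm{id}$. The obstruction is genuinely a sign: $\overline{\mu}\circ\mu$ reverses orientation on the $y$-coordinate because the $y^2$-term forces a square root of a conjugation-anti-invariant scalar, and there is no freedom (no nontrivial automorphism other than $\nu$ itself) to absorb it. Hence the cocycle condition fails for both choices, and by Theorems \ref{Wa} and \ref{sqa} the curve $X$ is not definable over $\mathbb{R}$, even though $M_{\mathbb{C}/\mathbb{R}}(X)=\mathbb{R}$.

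The main obstacle will be the bookkeeping in the second step: pinning down the scalars $\lambda,\mu$ so that $\mu$ actually maps $X$ onto $\bar X$ (this is where the hypotheses $a_1\in\mathbb{R}$, $a_2a_3\in\mathbb{R}$ are used in an essential way), and then verifying that $\overline{\mu}\circ\mu=\nu$ rather than $\mathrm{id}$ — i.e. that the unavoidable sign cannot be removed by rescaling. One must also double-check that no isomorphism $X\to\bar X$ is missed, which is exactly what the identification $\Aut(X)=\langle\nu\rangle$ from Lemma \ref{Cu1} guarantees.
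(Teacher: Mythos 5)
Your proposal is correct and follows essentially the same route as the paper: the paper's explicit isomorphism is $\mu(x:y:z)=(-z:iy:x)$, which is exactly of the form you guessed (using $a_1\in\mathbb{R}$ and $a_2a_3\in\mathbb{R}$ precisely as you indicate), and one checks $\bar\mu\mu=\overline{(\mu\nu)}\mu\nu=\nu\neq\mathrm{id}$, so Weil's cocycle condition fails for both possible choices. The only thing left implicit in your sketch is the actual verification of these two computations, which goes through exactly as you predict.
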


\begin{proof}
Observe that the following is an isomorphism between $X:=X_{a_1,a_2,a_3}$ and its conjugate $\bar X$: 
$$\mu(x: y: z)=(-z: iy: x).$$
Since $\Aut(X)$ is generated by $\nu(x: y: z)=(x: -y: z)$, the only isomorphisms between $X$ and $\bar X$ are $\mu$ and $\mu\nu$. 
Observe that $\bar \mu\mu=\nu$ and $\overline{(\mu\nu)}\mu\nu=\nu$.
Therefore Weil's cocycle condition from Theorem \ref{Wa} does not hold, so $X$ cannot be defined over $\mathbb{R}$.
\end{proof}

 Finally we study plane quartics with automorphism group isomorphic to $C_2\times C_2$, which belong to the  following family:
 $$X_{a,b,c}:\ x^4+y^4+z^4+ax^2y^2+bx^2z^2+cy^2z^2=0,$$
 where $a,b,c\in \cc$. It can be easily checked that $X_{a,b,c}$ is smooth unless $a^2+b^2+c^2-abc=4$ or some of $a^2,b^2,c^2$ is equal to $4$.
A subgroup of ${\rm Aut}(X_{a,b,c})$ isomorphic to $C_2\times C_2$  is generated by the involutions:
 $$\iota_1(x:y:z)=(-x:y:z),\quad \iota_2(x:y:z)=(x:-y:z).$$
 We will denote by $G\cong S_3\ltimes (C_2\times C_2)$ the group acting on the triples $(a,b,c)\in \cc^3$ generated
 by
 $$g_1(a,b,c)=(b,a,c),\quad g_2(a,b,c)=(b,c,a),$$
 $$g_3(a,b,c)=(-a,-b,c),\quad g_4(a,b,c)=(a,-b,-c).$$
  The following comes from a result by E.W. Howe \cite[Proposition 2]{Ho}, observing that any isomorphism between $X_{a,b,c}$ and $X_{g(a,b,c)}$, $g\in G$, is defined over $\qq(i)$.

  \begin{proposition}\label{howe} 
  If $a^2,b^2,c^2$ are pairwise distinct, then ${\rm Aut}(X_{a,b,c})\cong C_2\times C_2$. 
Moreover, if $F$ is a field containing $\qq(i)$, then a plane quartic $X_{a',b',c'}$ is isomorphic to $X_{a,b,c}$ over $F$ if and only if $g(a,b,c)=(a',b',c')$ for some $g\in G$
  %Let $L$ be a subfield of $\cc$ and $a,b,c\in L$ such that $a^2,b^2,c^2$ are pairwise distinct. Then ${\rm Aut}(X_{a,b,c})\cong C_2\times C_2$. 
  %Moreover, a plane quartic $X_{a',b',c'}$ ($a',b',c'\in L$) is isomorphic to $X_{a,b,c}$ over $L$ if and only if $g(a,b,c)=(a',b',c')$ for some $g\in G$ if $ \qq(i)\subset L$, and 
  %for some $g\in \langle g_1,g_2\rangle$ otherwise.
 \end{proposition}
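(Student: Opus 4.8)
The plan is to reduce Proposition~\ref{howe} to the two assertions it contains, handling the automorphism-group statement first and the isomorphism statement second, and in both cases exploiting the explicit diagonal/monomial structure of the situation so that Howe's result \cite[Proposition 2]{Ho} can be invoked cleanly. For the first assertion, I would argue that when $a^2,b^2,c^2$ are pairwise distinct the curve $X_{a,b,c}$ is smooth (by the stated smoothness criterion, after checking the degenerate loci are avoided) and cannot have an automorphism group strictly larger than $C_2\times C_2$: by Table~\ref{quartics}, any quartic with larger automorphism group containing $C_2\times C_2$ lies (up to projective equivalence) in one of the families with group $D_4$, $S_4$, $C_4\circledcirc(C_2)^2$, or $C_4^2\rtimes S_3$, and each of these imposes a symmetry among the coefficients $a,b,c$ — e.g. two of them must coincide for the $D_4$ model $x^4+y^4+z^4+az^2(y^2+x^2)+by^2x^2$ — which forces two of $a^2,b^2,c^2$ to be equal after accounting for the sign changes in $G$. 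Ruling these out gives $\Aut(X_{a,b,c})\cong C_2\times C_2$.

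For the second assertion I would proceed in two directions. The ``if'' direction is immediate: each generator $g_i$ of $G$ is realized by an explicit projective transformation defined over $\qq(i)$ (the coordinate permutations $g_1,g_2$ by permutation matrices, and the sign changes $g_3,g_4$ by diagonal matrices with entries in $\{1,i\}$, since multiplying a variable by $i$ sends $x^4\mapsto x^4$ but $x^2y^2\mapsto -x^2y^2$), so $X_{a,b,c}\cong X_{g(a,b,c)}$ over $\qq(i)\subseteq F$. For the ``only if'' direction, suppose $\varphi\colon X_{a,b,c}\to X_{a',b',c'}$ is an isomorphism over $F$; I would first translate the data into the language of \cite[Proposition~2]{Ho}. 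These quartics are precisely the ones admitting the Klein four-group $\langle\iota_1,\iota_2\rangle$ of diagonal involutions, and Howe's result classifies such quartics together with their isomorphisms — the point being that $\varphi$ must carry the distinguished four-group of $X_{a,b,c}$ to that of $X_{a',b',c'}$ (here one uses that $\Aut(X_{a,b,c})\cong C_2\times C_2$, so this four-group is intrinsic, not just one choice among several), hence $\varphi$ normalizes the torus and is therefore a monomial transformation. A monomial transformation permuting the three diagonal involutions and preserving the Fermat-type shape $x^4+y^4+z^4+\cdots$ is exactly an element of $G$ acting on the parameter triple, which yields $g(a,b,c)=(a',b',c')$.

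The main obstacle I anticipate is the second direction: one must be careful that an abstract isomorphism $\varphi$ over $F$ really is forced into monomial form. The key leverage is the already-established fact that $\Aut(X_{a,b,c})\cong C_2\times C_2$ (from the first part, under the pairwise-distinctness hypothesis), which pins down the Klein four-group canonically on both sides; without it a stray extra automorphism could conjugate the four-group to a different embedding and the monomial conclusion would fail. Once monomiality is in hand, the bookkeeping — which sign patterns and coordinate permutations are allowed, and that they assemble into the group $G\cong S_3\ltimes(C_2\times C_2)$ of order $24$ described before the proposition — is routine, and the fact that the realizing matrices only involve $i$ shows everything is defined over $\qq(i)$, as the parenthetical remark before the statement asserts. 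I would therefore phrase the proof as: (1) smoothness and $\Aut\cong C_2\times C_2$ from Table~\ref{quartics}; (2) the explicit $\qq(i)$-transformations for the ``if'' part; (3) the reduction to \cite[Proposition~2]{Ho} via the canonical four-group for the ``only if'' part.
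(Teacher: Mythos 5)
Your overall route is the same as the paper's: the paper gives no written proof of Proposition \ref{howe} at all, simply deriving it from \cite[Proposition 2]{Ho} together with the observation that the realizing monomial isomorphisms (permutation matrices and diagonal matrices with entries in $\{1,i\}$) are defined over $\qq(i)$ — exactly your parts (2) and (3). The one place where your added material has a real problem is part (1): as written, your derivation of $\Aut(X_{a,b,c})\cong C_2\times C_2$ from Table \ref{quartics} is circular. To conclude that projective equivalence of $X_{a,b,c}$ with, say, the $D_4$-model $X_{b',a',a'}$ forces two of $a^2,b^2,c^2$ to coincide "after accounting for the sign changes in $G$", you need the only-if half of the second assertion (that any such equivalence is induced by an element of $G$), and your proof of that half invokes the first assertion to make the four-group canonical. (You also omit $C_4\circledcirc A_4$ and ${\rm PSL}_2(7)$ from the list of groups containing $C_2\times C_2$.) The gap is repairable without Howe: if $H:=\Aut(X_{a,b,c})$ strictly contains $V=\langle\iota_1,\iota_2\rangle$, then $N_H(V)\supsetneq V$ (true for every group in Table \ref{quartics} containing $V$), and any $h\in N_H(V)\setminus V$ permutes the involutions $\iota_1,\iota_2,\iota_1\iota_2$, hence permutes their isolated fixed points $(1\!:\!0\!:\!0),(0\!:\!1\!:\!0),(0\!:\!0\!:\!1)$ and is therefore monomial; a nonidentity element of $G$ fixing $(a,b,c)$ forces two of $a^2,b^2,c^2$ to be equal or some coefficient relation excluded by the hypothesis. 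Either make that argument, or simply attribute the automorphism statement to Howe as the paper does.
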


 %\begin{proof}
% The previous result is given by \cite[Proposition 2]{Ho} in case $L=\cc$.
 %To generalize it, it is enough to observe that  $\iota_1, \iota_2$ and 
 %any isomorphism between $X_{a,b,c}$ and $X_{g(a,b,c)}$, 
% with $g\in \langle g_1,g_2\rangle$, are defined over $\qq$, 
 %while it is defined over $\qq[i]$ otherwise.
 %\end{proof}
 
 %Observe that $\iota_1, \iota_2$ are defined over $\qq$, thus the automorphism group over $\cc$ and over $L$ do coincide.
 %An isomorphism between $X_{a,b,c}$ and $X_{b,a,c}$ is given by
 %$$f_1(x:y:z)=(x:z:y),$$
% which is defined over $\qq$. Similarly, an isomorphism between $X_{a,b,c}$ and $X_{g(a,b,c)}$ can be defined over $\qq$ if $g\in \langle g_1,g_2\rangle$.
 %On the other hand, an isomorphism between $X_{a,b,c}$ and $X_{-a,-b,c}$ is given by
%$$f_2(x:y:z)=(ix:y:z),$$
%which is defined over $\qq(i)$. Similarly for the non-trivial $g\in G-\langle g_1,g_2\rangle$.
%Thus, in case $\qq(i)\subset L$, we have that $X_{a,b,c}$ is isomorphic to $X_{a',b',c'}$ over $\cc$ if and only if the same is true over $L$.
%On the other hand if $L$ does not contain the imaginary unit, then there exists an isomorphism over $L$ between the two curves if and only if $g(a,b,c)=(a',b',c')$ for $g\in \langle g_1,g_2\rangle$.
The following result and Corollary \ref{fmqua} prove Theorem \ref{qua}.
 \begin{corollary} Let $X_{a,b,c}$ as before with $a^2,b^2,c^2$ pairwise distinct.
If the field of moduli of $X_{a, b, c}$ relative to the extension $\mathbb{C}/\mathbb{R}$ is $\mathbb{R}$, then it is a field of definition for $X_{a, b, c}$.
\end{corollary}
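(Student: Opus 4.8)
The plan is to exploit the combinatorial structure of the group $G\cong S_3\ltimes(C_2\times C_2)$ acting on the parameter space and to reduce the question to producing, for each $\sigma\in\Aut(\cc/\rr)$ (essentially just complex conjugation), an isomorphism $X_{a,b,c}\to X_{a,b,c}^\sigma=X_{\bar a,\bar b,\bar c}$ defined over $\cc$ satisfying Weil's cocycle condition of Theorem \ref{Wa}. Since $\Aut(\cc/\rr)=\{\mathrm{id},\sigma\}$ with $\sigma$ the complex conjugation of order two, the cocycle condition reduces to a single identity: we need an isomorphism $f=f_\sigma:X_{a,b,c}\to X_{\bar a,\bar b,\bar c}$ with $\bar f\circ f=\mathrm{id}$ (here $\bar f=f^\sigma$). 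By Proposition \ref{howe}, because the field of moduli relative to $\cc/\rr$ is $\rr$, the curve $X_{\bar a,\bar b,\bar c}$ is isomorphic to $X_{a,b,c}$ over $\cc$, hence there exists $g\in G$ with $g(a,b,c)=(\bar a,\bar b,\bar c)$; moreover any such isomorphism is, up to composition with an element of $\Aut(X_{a,b,c})\cong C_2\times C_2$, one of the explicit linear maps realizing the $G$-action, and all of these are defined over $\qq(i)$.

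First I would fix $g\in G$ with $g(a,b,c)=(\bar a,\bar b,\bar c)$ and a corresponding explicit isomorphism $\phi_g:X_{a,b,c}\to X_{\bar a,\bar b,\bar c}$ given by a monomial matrix in $\PGL(3,\cc)$ with entries in $\{1,i,-1,-i\}$ (diagonal sign changes for $g_3,g_4$, the coordinate transposition for $g_1$, the $3$-cycle for $g_2$, and products thereof). Applying $\sigma$ to the parameters twice gives $g(\bar a,\bar b,\bar c)=(a,b,c)$, so $\phi_g^\sigma:X_{\bar a,\bar b,\bar c}\to X_{a,b,c}$ is again an isomorphism of the same shape, and the composite $\phi_g^\sigma\circ\phi_g$ is an automorphism of $X_{a,b,c}$, i.e. an element of $\langle\iota_1,\iota_2\rangle$. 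The key computation is then to evaluate $\phi_g^\sigma\circ\phi_g$ explicitly for each of the (finitely many, up to the $C_2\times C_2$ ambiguity) possibilities for $g$, and to check that we can always choose the representative $f_\sigma:=\phi_g\circ\iota$ (with $\iota\in\{\mathrm{id},\iota_1,\iota_2,\iota_1\iota_2\}$) so that $f_\sigma^\sigma\circ f_\sigma=\mathrm{id}$. Because $\iota_1,\iota_2$ are defined over $\rr$ (sign changes of real coordinates), we have $(\phi_g\iota)^\sigma\circ(\phi_g\iota)=\iota\cdot(\phi_g^\sigma\circ\phi_g)\cdot\iota$ up to the commuting relations, so adjusting by $\iota$ changes the value of the obstruction $\phi_g^\sigma\circ\phi_g$ within its conjugacy class in $C_2\times C_2$; since that group is abelian this does not by itself kill the obstruction, so the real content is that for every $g\in G$ with $g^2=\mathrm{id}$ on the relevant orbit, the element $\phi_g^\sigma\circ\phi_g$ actually equals the identity for a suitable choice of scalar normalization of the lift of $g$ to $\mathrm{GL}(3,\cc)$ — this is where one uses that the entries lie in $\{\pm1,\pm i\}$ and that $\sigma$ inverts $i$.

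Once the cocycle identity $f_\sigma^\sigma\circ f_\sigma=\mathrm{id}$ (equivalently $f_{\sigma\sigma}=f_\sigma^\sigma\circ f_\sigma$, and trivially $f_{\mathrm{id}}=\mathrm{id}$) is verified, Weil's descent theorem (Theorem \ref{Wa}) produces a curve $Y$ over $\rr$ together with an isomorphism $X_{a,b,c}\to Y$ over $\cc$, which is exactly the statement that $\rr=M_{\cc/\rr}(X_{a,b,c})$ is a field of definition. I would organize the verification by cases according to the order-two elements of $G$: the identity (when $(a,b,c)$ is already real, where $f_\sigma=\mathrm{id}$ works), the three transpositions and the three sign-change involutions (and their conjugates), and the order-two products such as $g_1g_3$. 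The main obstacle is precisely this case analysis: one must be careful that the linear maps realizing $G$ are only defined in $\PGL$, so lifting to $\mathrm{GL}(3,\cc)$ introduces a scalar that must be pinned down before the composite $f_\sigma^\sigma\circ f_\sigma$ can be literally compared to the identity matrix, and one must check the ambiguity by $C_2\times C_2$ does not spoil the argument for any branch. All of this is a finite check over a group of order $24$, but it is the crux; everything else is formal from Theorems \ref{Wa}, \ref{sqa} and Proposition \ref{howe}.
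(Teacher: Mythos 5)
Your proposal is correct and follows essentially the same route as the paper: use Proposition \ref{howe} to produce $g\in G$ with $g(a,b,c)=(\bar a,\bar b,\bar c)$, realize $g$ by an explicit monomial isomorphism $\mu$ with entries in $\{\pm1,\pm i\}$, verify the single Weil cocycle identity $\bar\mu\mu=\mathrm{id}$, and conclude by Theorem \ref{Wa}. The only difference is that the paper carries out the verification just for the generators $g_1,\dots,g_4$ (with $g_2$ excluded because it forces $a=b=c\in\rr$), whereas you rightly insist on the finite check over all of $G$; the composite cases either are vacuous by the pairwise-distinctness of $a^2,b^2,c^2$ or work out exactly as the generator cases do.
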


\begin{proof}
%Let $X_{\bar a, \bar b, \bar c}$ be the conjugate of $X_{a,b,c}$.
%given by $$x^4+y^4+z^4+\bar a x^2y^2+\bar b x^2z^2+\bar c y^2z^2=0.$$
By Proposition \ref{howe}, the curve $X_{a, b, c}$ and its conjugate $X_{\bar a, \bar b, \bar c}$ are isomorphic over $\cc$ if and only if $g(a, b, c)=(\bar a, \bar b, \bar c)$  for some $g\in G$.
%Moreover, since $a^2, b^2, c^2$ are distinct then $\Aut(X_{a, b, c})= \langle\iota_1, \iota_2\rangle$. 
%Given an isomorphism $\mu:X_{a, b, c}\rightarrow X_{\bar a, \bar b, \bar c}$, all other isomorphisms are given by $\mu\iota_1^j\iota_2^k$, where $j,k\in \{0,1\}$. 
It is enough to consider the generators of $G$.
 \begin{enumerate}[i)]
 \item If $(\bar a, \bar b, \bar c)=g_1(a, b, c)=(b, a, c)$ then $\mu:X_{a, b, c}\rightarrow  X_{b, a, c},\ \mu(x: y: z)= (x: z: y)$ is an isomorphism 
and $\bar \mu\mu=id$.  
\item If $(\bar a, \bar b, \bar c)=g_2(a, b, c)=(b, c, a)$, i.e., $\bar a=b,\ \bar b=c,\ \bar c=a,$
 then $a=b=c\in\mathbb{R}$, contradicting the hypothesis on $a,b,c$. So this case does not appear.
\item If $(\bar a, \bar b, \bar c)=g_3(a, b, c)=(-a, -b, c)$ then $\mu:X_{a, b, c}\rightarrow X_{-a, -b, c},\ \mu(x: y: z)= (ix: y: z)$  is an isomorphism
 and  $\bar\mu \mu=id$.  
\item If $(\bar a, \bar b, \bar c)=g_4(a, b, c)=(a, -b, -c)$ then $\mu:X_{a, b, c}\rightarrow X_{a, -b, -c},\ \mu(x: y: z)= (x: y: iz)$ is an isomorphism
 and  $\bar\mu \mu=id$. 
 \end{enumerate} 
Therefore by Weil's Theorem we conclude that  $X_{a, b, c}$ can be defined over $\mathbb{R}$.
\end{proof}
%Thus in this section we have proved the following.
%\begin{corollary}
%If the field of moduli of a smooth plane quartic $X$ relative to the extension $\cc/\rr$ is $\rr$ and $X$ can not be defined %over $\rr$, then $\Aut(X)\cong C_2$.
%\end{corollary}

\noindent  We now determine the field of moduli of a plane quartic in the family. Consider the following polynomials invariant for $G$:
 $$j_1(a,b,c)=abc,\ j_2(a,b,c)=a^2+b^2+c^2,\ j_3(a,b,c)=a^4+b^4+c^4;$$
 %$$j_4(a,b,c)=a+b+c,\ j_5(a,b,c)=a^3+b^3+c^3.$$
% \begin{corollary}\label{jota} \
% \begin{itemize}
%\item If $\qq(i)\subset L$, then $X_{a,b,c}$ is isomorphic to $X_{a',b',c'}$ over $L$ if and only if $j_k(a,b,c)=j_k(a',b',c')$ for $k=1,2,3$;
%\item If $\qq(i)\not\subset L$, then $X_{a,b,c}$ is isomorphic to $X_{a',b',c'}$ over $L$ if and only if $j_k(a,b,c)=j_k(a',b',c')$ for $k=2,4,5$.
%\end{itemize}
 %The ring of invariants in $\cc[a,b,c]$ for the action of the group $G$ equals $\cc[abc, a^2+b^2+c^2, a^4+b^4+c^4]$.
% \end{corollary}
 %\begin{proof} The morphism $\varphi(a,b,c)=(abc, a^2+b^2+c^2, a^4+b^4+c^4)$ has degree $24=|G|$ and clearly $\varphi(g(a,b,c))=\varphi(a,b,c)$ for any $g\in G$. Similarly, the morphism $\phi(a,b,c)=(a+b+c,a^2+b^2+c^2,a^3+b^3+c^3)$ has degree $6=|\langle g_1,g_2\rangle|$ and $\phi(g(a,b,c))=\phi(a,b,c)$ for any $g\in \langle g_1,g_2\rangle$.
 %Thus the claim follows from Proposition \ref{howe}.
 %\end{proof}

 \begin{proposition}\label{fm}
 Let $F/K$ be a general Galois extension with $\qq(i)\subset F\subset \cc$ and let $a,b,c\in F$ such that $a^2,b^2,c^2$ are pairwise distinct and  $X_{a,b,c}$ is smooth. The field of moduli of $X_{a,b,c}$ relative to the extension $F/K$ equals $K(j_1,j_2,j_3)$. 
 \end{proposition}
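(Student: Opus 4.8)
The plan is to prove the two inclusions $M_{F/K}(X_{a,b,c}) \subseteq K(j_1,j_2,j_3)$ and $K(j_1,j_2,j_3) \subseteq M_{F/K}(X_{a,b,c})$ separately, using Proposition \ref{howe} to translate the condition ``$X_{a,b,c}$ is isomorphic to its conjugate over $F$'' into the combinatorial condition on the $G$-action on triples $(a,b,c)$. Throughout I will use that $G\cong S_3\ltimes(C_2\times C_2)$ acts on $\cc^3$ as described, and that (by Proposition \ref{howe}, since $\qq(i)\subseteq F$) two members $X_{a,b,c}$ and $X_{a',b',c'}$ of the family with pairwise distinct squares are $F$-isomorphic precisely when $(a',b',c')=g(a,b,c)$ for some $g\in G$. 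In particular, for $\sigma\in\Aut(F/K)$ the curve $X_{a,b,c}^\sigma = X_{\sigma(a),\sigma(b),\sigma(c)}$ is $F$-isomorphic to $X_{a,b,c}$ if and only if $\sigma$ permutes $(a,b,c)$ by an element of $G$, i.e.\ $U_{F/K}(X_{a,b,c}) = \{\sigma : (\sigma(a),\sigma(b),\sigma(c)) \in G\cdot(a,b,c)\}$.

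For the inclusion $\subseteq$: if $\sigma\in\Aut(F/K)$ fixes $K(j_1,j_2,j_3)$, then $\sigma$ fixes $j_1(a,b,c),j_2(a,b,c),j_3(a,b,c)$, and I want to conclude $\sigma\in U_{F/K}(X_{a,b,c})$, i.e.\ that $(\sigma(a),\sigma(b),\sigma(c))$ lies in the $G$-orbit of $(a,b,c)$. The key algebraic point is that $j_1,j_2,j_3$ generate the ring of $G$-invariants of $\cc[a,b,c]$ well enough to separate $G$-orbits of triples with pairwise distinct squares — concretely, knowing $a^2+b^2+c^2$, $a^4+b^4+c^4$ and $abc$ determines the unordered triple $\{a^2,b^2,c^2\}$ (from the first two one gets the elementary symmetric functions of $a^2,b^2,c^2$ up to the ambiguity resolved by $(abc)^2$), and then $abc$ pins down the signs up to the $C_2\times C_2$ part and $S_3$-reordering, which is exactly the $G$-action. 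So $\sigma$ sending the $j_i$'s to themselves forces $(\sigma(a),\sigma(b),\sigma(c))$ into the $G$-orbit, hence $\sigma\in U_{F/K}(X_{a,b,c})$; this gives $M_{F/K}(X_{a,b,c}) = \mathrm{Fix}(U_{F/K}(X_{a,b,c})) \subseteq K(j_1,j_2,j_3)$.

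For the reverse inclusion $K(j_1,j_2,j_3)\subseteq M_{F/K}(X_{a,b,c})$: every $\sigma\in U_{F/K}(X_{a,b,c})$ satisfies $(\sigma(a),\sigma(b),\sigma(c)) = g(a,b,c)$ for some $g\in G$, and since $j_1,j_2,j_3$ are $G$-invariant polynomials, $\sigma$ fixes $j_1(a,b,c),j_2(a,b,c),j_3(a,b,c)$; thus $U_{F/K}(X_{a,b,c})$ fixes $K(j_1,j_2,j_3)$ pointwise, which says $K(j_1,j_2,j_3)\subseteq \mathrm{Fix}(U_{F/K}(X_{a,b,c})) = M_{F/K}(X_{a,b,c})$. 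I expect the main obstacle to be the orbit-separation argument in the first inclusion: one must be careful that $a^2,b^2,c^2$ being pairwise distinct and $X_{a,b,c}$ smooth are exactly the hypotheses that make the three invariants $j_1,j_2,j_3$ suffice (without extra invariants like $a^2b^2+b^2c^2+c^2a^2$, which is recoverable from $j_2$ and $(abc)^2=j_1^2$ via $\tfrac12(j_2^2 - j_3)$), and to check that the residual sign/permutation ambiguity after fixing all three $j_i$ genuinely coincides with $G$ rather than a larger group — the phrase ``general Galois extension'' presumably lets us avoid small-field pathologies in this step.
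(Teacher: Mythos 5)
Your proof is correct and follows essentially the same route as the paper: both reduce the statement to the identity $U_{F/K}(X_{a,b,c})=\Aut(F/K(j_1,j_2,j_3))$ via Proposition \ref{howe} together with the fact that $j_1,j_2,j_3$ separate the $G$-orbits of triples with pairwise distinct squares, and then conclude by taking fixed fields of the Galois extension $F/K(j_1,j_2,j_3)$. The only difference is in how orbit separation is justified: the paper observes that $\varphi=(j_1,j_2,j_3)$ is a $G$-invariant map of degree $24=|G|$, whereas you recover the orbit explicitly from the elementary symmetric functions of $a^2,b^2,c^2$ and the sign information in $j_1=abc$ — both work.
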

 \proof 
 The morphism $\varphi(a,b,c)=(abc, a^2+b^2+c^2, a^4+b^4+c^4)$ has degree $24=|G|$ and clearly $\varphi(g(a,b,c))=\varphi(a,b,c)$ for any $g\in G$. 
 Thus, by Proposition \ref{howe}, $X_{a,b,c}$ is isomorphic to $X_{a',b',c'}$ over $F$  if and only if  $j_k(a,b,c)=j_k(a',b',c')$ for $k=1,2,3$.
 Observe that $X_{a,b,c}^{\sigma}=X_{\sigma(a),\sigma(b),\sigma(c)}$ is isomorphic to $X_{a,b,c}$ over $F$ if and only if  for $k=1,2,3$ we have
 $$j_k:=j_k(a,b,c)=j_k(\sigma(a),\sigma(b),\sigma(c))=\sigma(j_k(a,b,c)).$$
 Thus $U_{F/K}(X_{a,b,c})=\{\sigma\in {\rm Aut}(F/K): X_{a,b,c}^{\sigma}\cong X_{a,b,c}\}={\rm Aut}(F/K(j_1, j_2, j_3)).$
 Since $L/K$ is a general Galois extension we deduce that
 $$M_{F/K}(X_{a,b,c})={\rm Fix}(U_{F/K}(X_{a,b,c}))=K(j_1,j_2,j_3). $$ \qed
 
\begin{remark}
 Proposition \ref{howe} can be generalized to the case when $F$ does not contain $\qq(i)$.
 In this case $X_{a',b',c'}$ is isomorphic to $X_{a,b,c}$ over $F$ if and only if $g(a,b,c)=(a',b',c')$ for some $g\in \langle g_1,g_2\rangle$ and the field of moduli relative to a general Galois extension $F/K$ equals $K(j_2,j_4,j_5)$ where $j_4(a,b,c)=a+b+c,\ j_5(a,b,c)=a^3+b^3+c^3.$
 \end{remark}

%Note that Proposition \ref{fm} holds for $L=\mathbb{C}$ and $K=\mathbb{Q}$, in which case $\mathbb{Q}(abc, a^2+b^2+c^2, a^4+b^4+c^4)$ is the field of moduli $K_{X_{a, b, c}}$. 
%relative to the extension $\mathbb{C}/\mathbb{Q}$.

% \begin{example}\label{ex}
%We now consider the case $K=\qq,\ L=\qq(\sqrt 2, i)$  and the following plane quartic $$X_{\sqrt 2,\sqrt 8,i}:\ x^4+y^4+z^4+\sqrt 2x^2y^2+2\sqrt 2x^2z^2+iy^2z^2=0.$$
%By the previous remarks $X:=X_{\sqrt 2, \sqrt 8,i}$ is smooth and by Proposition \ref{howe} ${\rm Aut}(X)\cong C_2\times C_2$.
%By Proposition \ref{fm} the field of moduli of the curve relative to the extension $L/K$ equals $\qq(i)$.
%We now show that this is not a field of definition for $X$ by means of Weil's theorem \cite{We}.
%Observe that ${\rm Aut}(L/\qq(i))=\{id, \tau\}$ where $\tau(\sqrt 2)=-\sqrt 2$.
%An isomorphism between $X$ and $X^{\tau}$ is given by $f_2(x:y:z)=(ix:y:z)$ and any other such automorphism is of the form $f_{\tau}=f_2\circ \iota_1^r\circ \iota_2^s$ for some $r,s=0,1$.
%Since for any choice of $f_{\tau}$ we have
%$$f_{\tau}^{\tau}\circ f_{\tau}=f_{\tau}^2=f_2^2\not=f_{\tau^2}=f_{id}=id$$
%we conclude by Weil's theorem that $X$ can not be defined over $\qq(i)$.
%The previous example can be generalized to curves of the form $X_{a,b,c}$ where $a=\alpha\sqrt 2, b=\beta \sqrt 2, c\in \qq(i)$ with distinct $\alpha,\beta\in \qq$.
%\end{example}

We now consider the Galois extension $\qq(a,b,c)/\qq(j_1,j_2,j_3)$, assuming that $\qq(i)\subset \qq(a,b,c)$.
If $\sigma$ belongs to the Galois group of such extension, then $X_{a,b,c}^{\sigma}\cong X_{a,b,c}$ 
and $\sigma$ acts on $(a,b,c)$ as some $g_{\sigma}\in G$ by Proposition \ref{howe}.
Thus we can define a natural injective group homomorphism
$$\psi: {\rm Aut}(\qq(a,b,c)/\qq(j_1,j_2,j_3))\to G,\ \sigma\mapsto g_{\sigma}.$$
%given by the action of $\sigma\in {\rm Aut}(K(a,b,c)/K(j_1,j_2,j_3))$ on the triple $(a,b,c)$.
Observe that, if $a,b,c\in \cc$ are generic, then $\psi$ is an isomorphism since the degree of the extension
$\qq(a, b, c)/\qq(j_1, j_2, j_3)$ is $24=|G|$.
%Observe that the minimal polynomial of such extension is:
%$$p(x)=(x-a)^4(x+a)^4(x-b)^4(x+b)^4(x-c)^4(x+c)^4.$$
 %in general  the image of
%$\psi$ is a subgroup of $G$.
%as happens in Example \ref{ex}, 
%then

\begin{proposition}Let  $a,b,c\in \cc$ such that $a^2,b^2,c^2$ are pairwise distinct, $X_{a,b,c}$ is smooth and $\qq(i)\subset \qq(a,b,c)$. If ${\rm Im}(\psi)\subset \langle g_1,g_2\rangle$, then  $X_{a,b,c}$ can be defined over $\qq(j_1,j_2,j_3)=M_{\qq(a,b,c)/\qq(j_1,j_2,j_3)}(X_{a,b,c})$.
\end{proposition}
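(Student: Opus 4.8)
The plan is to apply Weil's Theorem~\ref{Wa} to the Galois extension $\qq(a,b,c)/\qq(j_1,j_2,j_3)$, exactly as in the proof of the previous corollary, but now working over this arithmetic extension instead of $\cc/\rr$. First I would recall that by Proposition~\ref{fm} (applied with $F=\qq(a,b,c)$ and $K=\qq(j_1,j_2,j_3)$, which is legitimate once we know the extension is Galois and $\qq(i)\subset F$) the field of moduli $M_{\qq(a,b,c)/\qq(j_1,j_2,j_3)}(X_{a,b,c})$ is indeed $\qq(j_1,j_2,j_3)$, so there is nothing to prove about the identification of the field; the content is that it is a field of definition. By Theorem~\ref{sqa} it then suffices to produce, for every $\sigma$ in $\Gamma:=\Aut(\qq(a,b,c)/\qq(j_1,j_2,j_3))$, an isomorphism $f_\sigma:X_{a,b,c}\to X_{a,b,c}^\sigma = X_{\sigma(a),\sigma(b),\sigma(c)}$ defined over $\qq(a,b,c)$ satisfying the cocycle condition $f_{\tau\sigma}=f_\sigma^\tau\circ f_\tau$.

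The key step is the construction of the $f_\sigma$. Since by hypothesis $\mathrm{Im}(\psi)\subset\langle g_1,g_2\rangle$, for each $\sigma\in\Gamma$ the triple $(\sigma(a),\sigma(b),\sigma(c))$ is obtained from $(a,b,c)$ by a permutation of the coordinates — that is, $g_\sigma$ lies in the subgroup of $G$ consisting of the $S_3$-action permuting $a,b,c$. For such permutations the natural isomorphism $X_{a,b,c}\to X_{g_\sigma(a,b,c)}$ is induced by the corresponding permutation matrix on $(x:y:z)$, which has entries in $\qq\subset\qq(a,b,c)$; concretely, as in cases (i) and (ii) of the previous corollary, a transposition of two of $a,b,c$ is realized by the corresponding transposition of two of $x,y,z$. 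So I would set $f_\sigma$ to be this permutation-of-coordinates isomorphism associated to $g_\sigma$. The point of restricting to $\langle g_1,g_2\rangle$ — as opposed to all of $G$ — is precisely that these isomorphisms are given by constant matrices (no factors of $i$), so they are $\sigma$-invariant: $f_\sigma^\tau=f_{(\text{same permutation})}$ for any $\tau$.

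Then the cocycle condition reduces to a computation in the group generated by the permutation matrices. Because $\psi:\Gamma\to G$ is a group homomorphism with image in $\langle g_1,g_2\rangle\cong S_3$, and $\sigma\mapsto f_\sigma$ is (via $\psi$) just the standard homomorphism $S_3\to\PGL(3,\cc)$ sending a permutation of $\{a,b,c\}$ to the same permutation of $\{x,y,z\}$, we have $f_\sigma=M_{g_\sigma}$ for a group homomorphism $g\mapsto M_g$ into $\PGL(3,\qq)$. Hence $f_{\tau\sigma}=M_{g_{\tau\sigma}}=M_{g_\tau}M_{g_\sigma}$, while $f_\sigma^\tau\circ f_\tau=M_{g_\sigma}^\tau\circ M_{g_\tau}=M_{g_\sigma}\circ M_{g_\tau}$ since the matrices are $\tau$-invariant; these agree because $M$ is a homomorphism and $\psi(\tau)\psi(\sigma)$ equals the relevant product — one must only be mildly careful that $\psi$ is a homomorphism for the left action convention used, which is exactly the content of the definition of $\psi$ just given in the excerpt. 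The main obstacle is bookkeeping: checking that the permutation isomorphisms genuinely carry $X_{a,b,c}$ to $X_{\sigma(a),\sigma(b),\sigma(c)}$ on the nose (not up to scalar), and that the composition convention for $\psi$ matches the one in Weil's cocycle relation; both are routine but need the explicit matrices. Once the cocycle is verified, Theorem~\ref{Wa} yields a model of $X_{a,b,c}$ over $\qq(j_1,j_2,j_3)$, completing the proof. $\qed$
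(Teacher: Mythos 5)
Your proposal is correct and follows essentially the same route as the paper: choose $f_\sigma$ to be the permutation-of-coordinates isomorphism attached to $g_\sigma\in\langle g_1,g_2\rangle$, observe that these matrices have entries in $\qq$ and hence are fixed by $\Aut(\qq(a,b,c)/\qq(j_1,j_2,j_3))$, and deduce Weil's cocycle condition from the homomorphism property of $\psi$. The paper does exactly this (taking $f_{\sigma_1}(x:y:z)=(x:z:y)$, $f_{\sigma_2}(x:y:z)=(z:x:y)$ and $f_{\sigma_2^s\sigma_1^r}=f_{\sigma_2}^s\circ f_{\sigma_1}^r$), and it is no more explicit than you are about the composition-convention bookkeeping you flag.
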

\begin{proof}
According to Weil's Theorem \ref{Wa} we need to choose an isomorphism $f_{\sigma}:X_{a,b,c}\to X_{\sigma(a),\sigma(b),\sigma(c)}$ for any $\sigma\in {\rm Aut}(\qq(a,b,c)/\qq(j_1,j_2,j_3))$ such that the following condition holds for all $\sigma,\tau$:
\begin{equation}\label{weil} f_{\sigma \tau}=f_{\tau}^{\sigma}\circ f_{\sigma}.\end{equation}
We assume that ${\rm  Im}(\psi)=\langle g_1,g_2\rangle$, the case when there is just an inclusion is similar.
Let $\sigma_1=\psi^{-1}(g_1)$ and $\sigma_2=\psi^{-1}(g_2)$.
We choose $f_{\sigma_1}(x:y:z)=(x:z:y)$, $f_{\sigma_2}(x:y:z):=(z:x:y)$ and $f_{\sigma}:=f_{\sigma_2}^s\circ f_{\sigma_1}^r$ if $\sigma=\sigma_1^r\circ \sigma_2^s$.
Observe that $f_{\tau}$ is always defined over $\qq$, so that $f_{\tau}^{\sigma}=f_{\tau}$.
Thus condition (\ref{weil}) clearly holds.
\end{proof}
\begin{example}
Consider a plane quartic $X=X_{a,b,c}$ where $a=\alpha, b=\bar \alpha$ with $\alpha\in \qq(i)$ and $c\in \qq$ such that $a^2,b^2,c^2$ are pairwise distinct and the curve is smooth.
By Proposition \ref{fm} the field of moduli of the curve relative to the extension
$\qq\subset\qq(a,b,c)=\qq(i)$ is $\qq$.
The Galois group ${\rm Aut}(\qq(i)/\qq)$ is generated by the complex conjugation $\sigma(z)=\bar z$ and $\psi(\sigma)=g_1$.
An isomorphism between $X$ and $X^\sigma$ is given by  $f_\sigma(x:y:z)=(x:z:y)$.
Since $id=f_{\sigma^2}=f_\sigma^\sigma\circ f_\sigma=(f_\sigma)^2$, then $X$ can be defined over $\qq$.
\end{example}

\bibliographystyle{amsplain}

\end{document}